\newtheorem{theorem}{Theorem}[section]
\newtheorem{corollary}[theorem]{Corollary}
\newtheorem{examples}[theorem]{Examples}
\newtheorem{definition}[theorem]{Definition}
\newtheorem{definitions}[theorem]{Definitions}
\newtheorem{example}[theorem]{Example}
\newtheorem{lemma}[theorem]{Lemma}
\newtheorem{proposition}[theorem]{Proposition}
\newtheorem{remark}[theorem]{Remark}
\newtheorem{remarks}[theorem]{Remarks}
\newcommand{\NN}{\mathbb{N}}
\begin{document}

\title{Graphs of Commutatively closed sets}
\author{ Andr\'e Leroy and Mona Abdi }
\address{Andre Leroy\\
Laboratoire de Math\'ematiques de Lens, UR 2462\\
University of Artois\\
F-62300 Lens, France\\
Email:andre.leroy@univ-artois.fr
\\Mona Abdi\\ Department of Mathematics\\
Shahrood University of Technology\\
Shahrood, Iran
}
\keywords{Matrix,
Commutatively closed,
Closure}

\begin{abstract}
The present work aims to exploit the interplay between the algebraic properties of rings
and the graph-theoretic structures of their associated graphs. We introduce commutatively closed graphs and investigate properties of commutatively closed subsets of a ring with the help of graph theory. A particular attention is paid to constructions of subsets of a given diameter. In particular, we compute the diameter of artinian semisimple  algebras. 
\end{abstract}

\maketitle

\section*{Introduction}
 
A subset $S$ of a ring $R$ is {\it commutatively closed} if for any $a,b\in R$ such that $ab\in S$, we also have $ba\in S$.  This notion was introduced and studied in \cite{AL}.  The aim of this paper is to investigate further this property and, in particular, to involve graph theory in the subject.   Let us first mention some easy examples.   Any subset of a commutative ring is commutatively closed.  The set $\{1\}$ is commutatively closed if and only if the ring $R$ is Dedekind finite.  Similarly, the set $\{0\}$ is commutatively closed if and only if the ring $R$ is reversible.  The intersection of two commutatively closed subsets is easily seen to be commutatively closed and hence every subset of $R$ is contained in a minimal commutatively closed subset: its commutative closure.  The {\it closure} of a subset  $S\subset R$ is denoted by $\overline{S}$.   In \cite{AL} different characterizations of $\overline{S}$ were given. It was shown that this notion is related to various kind of subsets such as idempotents elements,  nilpotent elements, invertible elements, and various kinds of regular elements.  It was also used to characterize, semicommutative rings, $2$-primal rings, and clean rings . 
 We will briefly recall some of the notions used in \cite{AL} at the end of this introduction. 
\par In section 1, we describe a graph structure on the sets $\overline{\{a\}}$ and define the diameter. Our aim is to compute the diameter of some classical rings, in particular, the ring of matrices over a division ring. In section 2, we give some ways of constructing $\overline{\{a\}}$  and study some particular rings such as the free algebras. In the last section, we investigate some properties of the commutatively closed graph of matrix rings and semisimple algebras.
\par For two elements $a,b\in R$ we write $a\sim_1 b$ if there exists $c,d\in R$ such that $a=cd$ and $b=dc$.  We then define by induction $a\sim_{n+1} b$ if and only if there exists an element $c\in R$ such that $a\sim_1 c$ and $c\sim_n b$. For two elements $a,b\in R$, we also define $a\sim b$ if there exists $n\in \mathbb{N}$ such that $a\sim_n b$. For a subset $S\subseteq R$
 we denote $S_i$ the set $\{x\in R \mid x\sim_i s, {\rm \; for\;  some \;} s \in S \}$.  Since our ring $R$ is unital, the chain $S_i$ is ascending and we have  $\overline{S}=\cup_{i\ge 0}S_i$  (for details about these constructions we refer the reader to \cite{AL}). 
\par We recall that an element $a\in R$ is called {\it von Neumann regular} if there is $x\in R$ such that $a=axa$. Similarly, we define $a\in R$ to be a {\it $\pi$-regular} element of $R$ if $a^{n}xa^{n}=a^{n}$ for some $x\in R$ and $n\geq 1$. An element $a\in R$ is called {\it right (left) $\pi$-regular}, if $a^{n+1}x=a^{n}$ ($xa^{n+1}=a^{n}$) for some $x\in R$ and $n\geq 1$.   We call $a\in R$ {\it strongly $\pi$-regular} if it is both left and right $\pi$-regular.
\par Throughout this paper $R$ will be a unital ring, $U(R)$ and $N(R)$ will stand for the set of invertible and nilpotent elements of $R$, respectively. For an element $a$ of a ring $R$ we denote $l(a)$ (resp. $r(a)$) its left (resp. right) annihilator.
\par
Let us mention some results from \cite{AL}.  They give motivation for the subject and basic information about the results.
 
\begin{theorem}\cite[Theorem 2.7]{AL}
\label{0.1}
Let $\varphi : R \longrightarrow S$ be a ring homomorphism, then
\begin{enumerate}
\item For any $X\subseteq R$, $\varphi(\overline{X})\subseteq \overline{\varphi(X)}$.
\item If $\varphi$ is a ring isomorphism, then for any $X\subseteq R$, $\varphi(\overline{X})=\overline{\varphi(X)}$.
\item If $T\subseteq S$ is commutatively closed in $S$, then $\varphi^{-1}(T)$ is commutatively closed in $R$.
\item If $S$ is reversible, $Ker(\varphi)$ is commutatively closed.
\item If $S$ is Dedekind-finite, then $\varphi^{-1}(\{1\})$ is commutatively closed.
\end{enumerate}
\end{theorem}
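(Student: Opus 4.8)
The plan is to exploit the two descriptions of the closure recalled in the introduction—the defining condition of a commutatively closed set and the formula $\overline{X}=\bigcup_{i\ge 0}X_i$ via the pairing relations $\sim_n$—and to treat the five items in the order that minimizes work: first the purely definitional statements (3)--(5), then the closure statements (1)--(2).

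For (3), I would argue directly from the definition. Let $a,b\in R$ with $ab\in\varphi^{-1}(T)$; then $\varphi(a)\varphi(b)=\varphi(ab)\in T$, and since $T$ is commutatively closed in $S$ we get $\varphi(ba)=\varphi(b)\varphi(a)\in T$, i.e. $ba\in\varphi^{-1}(T)$. Items (4) and (5) are then immediate specializations of (3): a ring $S$ is reversible exactly when $\{0_S\}$ is commutatively closed in $S$, and Dedekind-finite exactly when $\{1_S\}$ is. Hence $\mathrm{Ker}(\varphi)=\varphi^{-1}(\{0_S\})$ and $\varphi^{-1}(\{1_S\})$ are commutatively closed by (3). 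Here I would invoke the two equivalences already recorded at the start of the paper (``reversible $\Leftrightarrow\{0\}$ closed'', ``Dedekind-finite $\Leftrightarrow\{1\}$ closed'').

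The heart of (1) is the observation that any ring homomorphism preserves the basic pairing relation. If $a\sim_1 b$ in $R$, say $a=cd$ and $b=dc$, then $\varphi(a)=\varphi(c)\varphi(d)$ and $\varphi(b)=\varphi(d)\varphi(c)$, so $\varphi(a)\sim_1\varphi(b)$ in $S$. A routine induction on $n$, inserting $\varphi(c)$ as the intermediate element, upgrades this to: $a\sim_n b$ implies $\varphi(a)\sim_n\varphi(b)$. Using $\overline{X}=\bigcup_{i\ge 0}X_i$, any $y\in\varphi(\overline{X})$ has the form $y=\varphi(x)$ with $x\sim_n s$ for some $s\in X$ and some $n\ge 0$; then $\varphi(x)\sim_n\varphi(s)$ with $\varphi(s)\in\varphi(X)$, whence $y\in\overline{\varphi(X)}$. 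This yields (1).

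Finally, (2) follows by symmetry. When $\varphi$ is an isomorphism, applying (1) to $\varphi^{-1}$ gives $\varphi^{-1}(\overline{\varphi(X)})\subseteq\overline{\varphi^{-1}(\varphi(X))}=\overline{X}$; pushing this forward by $\varphi$ gives $\overline{\varphi(X)}\subseteq\varphi(\overline{X})$, and combined with (1) we obtain equality. I do not anticipate a genuine obstacle: the only point needing a little care is the inductive step in (1), where one must check that the witness $c$ in $a\sim_1 c\sim_n b$ maps to a valid intermediate witness $\varphi(c)$ for $\varphi(a)\sim_1\varphi(c)\sim_n\varphi(b)$; everything else is direct from the definitions.
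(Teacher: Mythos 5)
Your proof is correct. Note that the paper itself offers no proof of Theorem \ref{0.1}: it is quoted verbatim from \cite{AL} (Theorem 2.7) as background, so there is no in-paper argument to compare against. Your route—item (3) directly from the definition, items (4) and (5) as specializations of (3) via the equivalences ``reversible $\Leftrightarrow \{0\}$ closed'' and ``Dedekind-finite $\Leftrightarrow \{1\}$ closed'', item (1) by the induction showing homomorphisms preserve $\sim_n$ combined with $\overline{X}=\bigcup_{i\ge 0}X_i$, and item (2) by applying (1) to $\varphi^{-1}$—is the natural one suggested by the machinery the paper recalls in its introduction, and it is complete and self-contained.
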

\begin{proposition}
\cite[Proposition 3.6]{AL}
\label{0.2}
\begin{enumerate}
\item If $R$ is Dedekind-finite and $a\in U(R)$, then $\overline{\{a\}}=\{uau^{-1} ~|~ u \in U(R)\}$.
\item The set of unit $U(R)$ of a ring $R$ is commutatively closed if and only if $R$ is Dedekind-finite.
\end{enumerate}
\end{proposition}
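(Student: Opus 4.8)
The plan is to treat the two parts separately, with part (1) being the more substantial. For part (1), the central observation I would establish is that, when $a$ is a unit in a Dedekind-finite ring, the elementary relation $a \sim_1 b$ is exactly conjugation of $a$ by a unit. Indeed, if $a = cd$ and $b = dc$, then since $a = cd \in U(R)$ the element $c$ has the right inverse $da^{-1}$ and $d$ has the left inverse $a^{-1}c$; Dedekind-finiteness promotes these one-sided inverses to genuine inverses, so $c, d \in U(R)$. Writing $d = c^{-1}a$ then gives $b = dc = c^{-1}ac$, a conjugate of $a$. Conversely, for any $u \in U(R)$ the factorization $a = u^{-1}(ua)$ together with $uau^{-1} = (ua)u^{-1}$ shows $a \sim_1 uau^{-1}$. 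Hence at the level of a single unit, $\sim_1$ and unit-conjugation coincide.

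With this in hand, I would show directly that the conjugacy class $C = \{uau^{-1} \mid u \in U(R)\}$ is itself commutatively closed, which by minimality identifies it with $\overline{\{a\}}$. For the closedness, suppose $xy \in C$, say $xy = uau^{-1}$. This is a unit, so as above $x$ and $y$ are units; then the identity $yx = y(xy)y^{-1} = (yu)a(yu)^{-1}$ puts $yx$ back in $C$. Since $a \in C$ (take $u = 1$), this gives $\overline{\{a\}} \subseteq C$. For the reverse inclusion, if $S$ is any commutatively closed set containing $a$ and $u \in U(R)$, then $a = u^{-1}(ua) \in S$ forces $(ua)u^{-1} = uau^{-1} \in S$, so every commutatively closed set containing $a$ contains all of $C$; in particular $C \subseteq \overline{\{a\}}$.

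For part (2), the reverse implication follows the same line: if $R$ is Dedekind-finite and $xy \in U(R)$, then $x, y \in U(R)$ by the one-sided-inverse argument, whence $yx \in U(R)$, so $U(R)$ is commutatively closed. For the forward implication, assume $U(R)$ is commutatively closed and take $ab = 1$. Then $ab \in U(R)$, so commutative closedness yields $ba \in U(R)$. The decisive step is to note that $ba$ is an idempotent, since $(ba)^2 = b(ab)a = ba$; a unit that is idempotent must equal $1$ (multiply $e^2 = e$ by $e^{-1}$), so $ba = 1$ and $R$ is Dedekind-finite.

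The only real subtlety, and the step I would be most careful about, is the repeated use of Dedekind-finiteness to upgrade a one-sided inverse to a two-sided one: this is precisely what makes the factors $c,d$ (or $x,y$) units rather than merely one-sided invertible, and without it the conjugation description of $\sim_1$ breaks down. The idempotent observation $(ba)^2 = ba$ in the forward direction of part (2) is the other small but essential trick.
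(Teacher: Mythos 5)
Your proof is correct in every step: the upgrade of one-sided inverses to units via Dedekind-finiteness, the identification of $\sim_1$ on a unit with unit-conjugation, the verification that the conjugacy class is itself commutatively closed (giving one inclusion by minimality) together with the observation that any commutatively closed set containing $a$ must contain every $uau^{-1}$ (giving the other), and the idempotent-unit trick $(ba)^2=ba$ for part (2) all check out. Note that the paper itself gives no proof of this proposition --- it is quoted from \cite[Proposition 3.6]{AL} as background --- so there is nothing internal to compare against; your argument is the natural, essentially canonical one for this statement.
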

\begin{proposition}
\cite[Proposition 4.7]{AL}
\label{0.3}
Let $k$ be a commutative field and $n\in \mathbb{N}$, the class of $\overline{\{0\}}$ in $M_{n}(k)$ is the set of nilpotent matrices.
\end{proposition}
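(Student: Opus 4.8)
The plan is to establish the two inclusions $\overline{\{0\}}\subseteq N(M_n(k))$ and $N(M_n(k))\subseteq\overline{\{0\}}$ separately. For the first inclusion I would use the classical fact that for any $c,d\in M_n(k)$ the products $cd$ and $dc$ have the same characteristic polynomial; in particular $cd$ is nilpotent if and only if $dc$ is. Thus the relation $\sim_1$ preserves nilpotency, and hence so does $\sim$. Since $\overline{\{0\}}=\bigcup_i S_i$ means that every element of $\overline{\{0\}}$ is linked to the nilpotent element $0$ by a finite chain of $\sim_1$-steps, it follows at once that every element of $\overline{\{0\}}$ is nilpotent.

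For the reverse inclusion, which is the substantive direction, I would show $N\sim 0$ for every nilpotent $N$ by induction on $r=\mathrm{rank}(N)$. The base case $r=0$ is $N=0$. For the inductive step, take a full-rank factorization $N=AB$ with $A\in M_{n\times r}(k)$, $B\in M_{r\times n}(k)$, and pad with zeros to obtain square matrices $c=(A\mid 0)$ and $d=\left(\begin{smallmatrix}B\\0\end{smallmatrix}\right)$ in $M_n(k)$. A direct computation gives $cd=AB=N$ and $dc=\left(\begin{smallmatrix}BA&0\\0&0\end{smallmatrix}\right)$, so $N\sim_1 dc$. Now $BA$ is an $r\times r$ matrix with the same nonzero eigenvalues as $AB=N$, hence it is nilpotent and therefore singular, so $\mathrm{rank}(dc)=\mathrm{rank}(BA)\le r-1<r$. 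The induction hypothesis yields $dc\sim 0$, and composing the chains gives $N\sim 0$, i.e. $N\in\overline{\{0\}}$.

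The hard part is entirely in the reverse inclusion: one must produce an explicit $\sim_1$-step that strictly decreases a well-founded complexity measure, and the main idea is to use the rank together with the full-rank factorization so that the ``swap'' $dc$ is again nilpotent of strictly smaller rank. The delicate points to verify carefully are that the padded $c,d$ indeed reproduce $N$ and its compressed swap, and that a nonzero nilpotent square matrix is singular. An alternative that I would keep in reserve is to invoke the conjugation-invariance of $\overline{\{0\}}$ (which follows from Theorem \ref{0.1}(2) applied to inner automorphisms of $M_n(k)$) to reduce to a Jordan form, and then peel the top superdiagonal entry off each Jordan block one $\sim_1$-step at a time; the rank induction is cleaner since it avoids canonical forms altogether.
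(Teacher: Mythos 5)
Your proof is correct, but it takes a genuinely different route from the paper's. (The paper cites \cite{AL} for this statement and proves the division-ring generalization itself in Proposition \ref{The class of Zero in matrix rings}.) For the easy inclusion the paper argues purely ring-theoretically: in \emph{any} ring $\{0\}_i\subseteq N_{i+1}(R)$, by induction from Proposition \ref{First basic prpoerties of connected elements}(3), since $a\sim_i 0$ forces $a^{i+1}=x\,0\,y=0$; your characteristic-polynomial argument is valid here but is intrinsically tied to commutative coefficients. For the substantive inclusion the paper conjugates a nilpotent matrix into strictly upper triangular form (Lemma \ref{Nilpotent matrices division ring and upper triangular}, combined with invariance of the closure under conjugation, Lemma \ref{class of a conjugate}) and then kills nonzero rows one at a time by multiplying by the idempotent $diag(I_{r},0)$, placing every strictly upper triangular matrix in $\{0\}_{n-1}$ (Proposition \ref{0_i is contained in N_i+1}(2)); this is essentially your ``reserve'' alternative. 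Your main argument instead runs an induction on $\mathrm{rank}(N)$ via a full-rank factorization $N=AB$, using that the swap $dc=diag(BA,0)$ is again nilpotent of strictly smaller rank --- a step you could make cleaner and more general by replacing the eigenvalue argument with $(BA)^{m+1}=B(AB)^{m}A=0$, which needs no spectral theory and would let your proof work verbatim over division rings. What each approach buys: yours avoids canonical forms entirely and gives $N\in\{0\}_{\mathrm{rank}(N)}$ with minimal machinery; the paper's gives the quantitatively sharp statement $A\sim_l 0\Leftrightarrow A^{l+1}=0$ (via Jordan blocks, Lemma \ref{Class of a Jordan block} and Proposition \ref{Nilpotent matrices}), which is what the later diameter computation in Theorem \ref{Diam of the class of nilpotent matrices} requires. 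Note that your rank bound is weaker in general: for a direct sum of $2\times 2$ Jordan blocks the nilpotency index gives a single step to $0$, while the rank is $n/2$.
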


For notions related to graph theory, we refer the reader to \cite{W}.


\vspace{5mm}

\section{Commutatively closed graphs and their diameters}

First, we start this section with some examples of commutatively closed sets.\par

\begin{examples}
{\rm
	\begin{enumerate}
		\item A ring $R$ is reversible if and only $\{0\}$ is commutatively closed;
	    \item A ring $R$ is Dedekind finite if and only if $\{1\}$ is commutatively closed;
	   \item For a right $R$-module $M_R$ the set  $E= \{u\in R \mid ann_M(u-1)\ne 0 \}$ is commutatively closed.   Indeed, if 
	   $u=ab\in E$ and $0\ne m\in M$ is such that $m(ab-1)=0$, then $ma\ne 0$ and $ma(ba-1)=m(ab-1)a=0$.  This gives that $ba\in E$.
	   \item The ring $R$ is symmetric if and only if for every $a,b,c\in R$, $abc=0$ implies that $acb=0$. This can be translated into asking that for any $a\in R$, $r(a):=\{b\in R \mid ab=0\}$ is 
	   commutatively closed.  We refer the reader to \cite{G} for more information on this kind of ring.
	   \item $U(R)-1$ is always commutatively closed.   
	   More generally, denoting the set of regular elements by 
	   $Reg(R)=\{r\in R \mid   
	   \exists x \in R \;{\rm such \; that}\; r=rxr\}$ we have that 
	   $Reg(R)-1$ is always commutatively closed.  A similar result is true for the set of unit regular elements and also for the set of 
	   strongly $\pi$-regular elements.  
	   For proofs of these facts we refer the reader to  \cite{LN}.
	   \item We consider $Z_l(R)=\{a\in R \mid r(a)\ne 0\}$ the set of left zero divisors. 
	   We define similarly $Z_r(R)$ the set of right zero divisors.
	   Similarly as in the item above, $Z_r(R)+1$ (resp. $Z_l(R)+1$) is commutatively closed (\cite{AL}).
	   \item The set $N(R)$ of nilpotent elements of a ring $R$ is easily seen to be commutatively closed.  
	   \item Recall that an element $r\in R$ is strongly clean if there exist an invertible element $u\in U(R)$ and an idempotent $e^2=e\in R$ such that $eu=ue$ and $r=e+u$.  
	   It is proved in Theorem 2.5 of \cite{G} that the set of strongly clean elements is commutatively closed.  In the same paper the author also shows that the set of Drazin (resp. almost, pseudo) invertible elements is also commutatively closed.
	\end{enumerate}
}
\end{examples}
The three first statements of the following results were obtained in \cite{AL}.  So we will only prove the last one.

\begin{proposition}
\label{First basic prpoerties of connected elements}
	\begin{enumerate}
		\item For any $n\ge 1$ and $a,b\in R$, we  have
		$a\sim_n b$ if and only if there exist two sequences of elements in $R$ $x_1,x_2,\dots,x_n$ and $y_1,y_2,\dots, y_n$ such that $a=x_1y_1, y_1x_1=x_2y_2, y_2x_2=x_3y_3,
		\dots , y_nx_n=b$.
		\item If $a\sim_n b$, then $a-b$ is a sum of 
		$n$ additive commutators.
		\item If $a\sim_n b$, then there exist $x,y\in R$ such that $ax=xb$ and $ya=by$.  Moreover, for $l\in \mathbb{N}$, we also have $b^{n+l}=ya^lx$ and $a^{n+l}=xb^ly$.  In particular, $b^n=yx$ and $a^n=xy$.  
		\item If elements $a,b$ in a ring $R$ are such that $a\sim_m b$, then for any $0<l<m$, we have $a^l\sim_q b^l$, where $q$ is such that $q= \left \lceil{\frac{m}{l}}\right \rceil$ 
	\end{enumerate}
\end{proposition}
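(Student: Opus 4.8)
The plan is to reduce the whole statement to repeated applications of a single $\sim_1$ relation, the point being that I will invoke item (3) of the Proposition on short sub-chains rather than on the entire chain at once. The crucial local fact I would isolate first is: \emph{if $a\sim_r c$ with $1\le r\le l$, then already $a^l\sim_1 c^l$}. To prove it I would apply item (3) to $a\sim_r c$, obtaining $x,y\in R$ with $a^{r+j}=xc^jy$ and $c^{r+j}=ya^jx$ for all $j\ge 0$; in particular $a^r=xy$ and $c^r=yx$. Taking $j=l-r\ge 0$ gives $a^l=xc^{l-r}y$, so if I set $u=xc^{l-r}$ and $v=y$ then $uv=xc^{l-r}y=a^l$ while $vu=(yx)c^{l-r}=c^r c^{l-r}=c^l$, which is exactly $a^l\sim_1 c^l$.

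Next I would use this local lemma to compress the chain. Unwinding the hypothesis $a\sim_m b$ gives a chain $a=a_0\sim_1 a_1\sim_1\cdots\sim_1 a_m=b$. I would cut it into $q=\lceil m/l\rceil$ consecutive blocks: the first $q-1$ of length exactly $l$ and the last of length $r=m-(q-1)l$. A short check shows $(q-1)l<m\le ql$, so indeed $1\le r\le l$, which is precisely what the value $q=\lceil m/l\rceil$ guarantees. The endpoints of the $k$-th block satisfy $a_{(k-1)l}\sim_{r_k} a_{kl}$ with $r_k\le l$, so the local lemma yields $a_{(k-1)l}^{\,l}\sim_1 a_{kl}^{\,l}$ for each $k$.

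Finally I would concatenate the $q$ one-step relations into $a^l=a_0^{\,l}\sim_1 a_l^{\,l}\sim_1\cdots\sim_1 a_m^{\,l}=b^l$, a chain of length $q$, and read off $a^l\sim_q b^l$ as claimed.

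I expect the genuine obstacle to be the final, short block: when $l\nmid m$ the last segment has length $r<l$, so I cannot simply read off the $l$-th powers from the clean identities $a^r=xy,\ c^r=yx$. The resolution, and the heart of the argument, is the padding step inside the local lemma, where the surplus exponent $l-r$ is absorbed by setting $u=xc^{l-r}$ and exploiting the relation $a^{r+j}=xc^jy$ from item (3). It is exactly this padding that lets each block of length $\le l$ cost only a single $\sim_1$ step, and hence forces the bound $q=\lceil m/l\rceil$ rather than a larger count of steps.
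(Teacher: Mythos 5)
Your proof is correct, but it handles the crux of the statement --- the mismatch when $l \nmid m$ --- by a mechanism genuinely different from the paper's. The paper pads the \emph{chain}: since $1_R \in R$, $a \sim_m b$ implies $a \sim_{lq} b$ (writing $m = lq - r$ with $0 \le r < l$, so $lq = m+r$), and then it inducts on $q$, splitting off the first $l$ steps of the lengthened chain; every block then has length exactly $l$, so only the weakest part of item (3) is needed ($a^l = xy$, $c^l = yx$, hence $a^l \sim_1 c^l$). You instead keep the chain at its original length $m$, cut it into $q-1$ blocks of length $l$ plus one short block of length $r = m - (q-1)l \le l$, and pad the \emph{exponent} inside your local lemma: for $a \sim_r c$ with $r \le l$ you invoke the stronger ``moreover'' identities of item (3), namely $a^l = x c^{l-r} y$ together with $c^r = yx$, and regroup as $u = x c^{l-r}$, $v = y$ to get $a^l = uv$, $c^l = vu$. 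Both routes are sound, and both ultimately iterate over $q$ blocks; the paper's version is lighter in its use of item (3) but relies on the monotonicity $\sim_m \subseteq \sim_{m+r}$ coming from unitality, whereas yours leaves the given chain untouched and isolates a sharper, reusable fact --- any sub-chain of length at most $l$ collapses to a single step between $l$-th powers --- which makes the bound $q = \left\lceil m/l \right\rceil$ transparent.
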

\begin{proof}
	(4) We can write $m=lq-r$ for some $0\le r <l$ and, since $1_R\in R$, $a\sim_m b$ implies that we also have $a\sim_{m+r=lq}b$. 
	We prove that $a^l\sim_q b^l$ by induction on $q$.  
	If $q=1$, the statement (3) above shows that $a^m=xy$ and $b^m=yx$ for some $x,y\in R$ and hence $a^m\sim_1 b^m$.  
	
	So let us now assume that $q>1$ and put $n:=m+r=lq$, so that we have $a\sim_n b$.  We use the same notations as in statement ($1$) and assume that there exist two sequences of elements in $R$ $x_1,x_2,\dots,x_n$ and $y_1,y_2,\dots, y_n$ such that $a=x_1y_1, y_1x_1=x_2y_2, y_2x_2=x_3y_3,
	\dots , y_nx_n=b$.   We than have that $a=x_1y_1 \sim_l y_lx_l\sim_{l(q-1)} b$.  The case $q=1$ and the induction hypothesis lead to $a^l\sim_1 (y_lx_l)^l\sim_{q-1}b^l$.  This gives the conclusion.  
\end{proof}

\vspace{2mm}

\begin{remarks}
	{\rm
	\begin{enumerate}
		\item 	With the same notations as in the above proposition \ref{First basic prpoerties of connected elements}, if $a\sim_n b$ and $x,y$ are such that $ax=xb$ and $by=ya$, then left multiplication by the elements $y$ and $x$ give rise to maps in $Hom_R(R/aR,R/bR)$
		and $Hom_R(R/bR,R/aR)$.   We then have the compositions $L_x\circ L_y=L_{a^n}$ and $L_y\circ L_x=L_{b^n}$.  In particular, if $a\sim b$ then $Hom_R(R/aR,R/bR)\ne \{0\}$.
		\item In section 3, we will study the ring of matrices over a division ring $D$.  It is interesting to observe that if $A,B \in M_n(D)$ are such that $A\sim B$, then the central spectrum $Sp(A)$ and $Sp(B)$ are equal. Indeed, there exists $m \in \mathbb{N}$ such that $A \sim_{m} B$. Proposition \ref{First basic prpoerties of connected elements} (3) shows that there exist matrices $X,Y\in M_n(D)$ such that $AX=XB$ and $YA=BY$. So, if $Av=\lambda v$ for some $0\ne v\in D^n$ and $\lambda \in k$ ($k$ is the center of $D$), then $BYv=YAv=Y\lambda v=\lambda Yv$.  Noting that $\lambda^m v=A^mv=XYv$, we conclude that $Yv\ne 0$ and hence $\lambda$ is indeed a central eigenvalue of $B$.  Let us remark that in the case $D$ is commutative, the equality between the spectrums is a consequence of the fact that the characteristic polynomials of $A$ and $B$ are the same (cf. \cite{AL}).
	\end{enumerate}
}
\end{remarks}

In the following, we define the commutatively closed graph and show that the graph $C(a)$ is always connected for every $a\in R$. Also, we analyze some other of his properties.

\begin{definitions}
\begin{enumerate}
\item[(1)] Let $a$ be an element in a ring $R$ and let $C(a)$ denote its commutative closure as defined in the introduction.   We define a graph structure with the elements of $C(a)=\overline{\{a\}}$ as vertices and two distinct vertices $x$ and $y$ of
$C(a)$  are said to be adjacent if and only if $y \in \{x\}_{1}$.   The commutatively closed graph of a ring $R$ is the union of all the graphs $C(a)$, for $a\in R$.  It will be denoted by $C(R)$.
\item[(2)]
Let $a$ be an element in $R$. In the class $\overline{\{a\}}$ of $a$, we define a distance as follows: For two elements $x, y \in \overline{\{a\}}$, we put $d(x,y)=min\{n \in \mathbb{N}~ |~ y \sim_{n} x\}$.
\\ One can easily check that $d$ is indeed a distance defined on $\overline{\{a\}}$.
\item[(3)] 
Let $R$ be a ring and $a\in R$, the diameter of a graph $C(a)$ is defined as follows:
\begin{center}
$diam\big(C(a)\big)=sup\big\{ d(x, y)~|~ x, y \in \overline{\{a\}} \big\}$.
\end{center}
Also, we define the diameter of a set $S\subseteq R$ of a ring $S$ as follows:
\begin{center}$diam(S)=sup \big\{ diam\big(C(a)\big) ~| ~ a \in S \big\}$. \end{center}
\end{enumerate}
\end{definitions}
\begin{theorem}
\label{The graph C(a)}
\begin{enumerate}
	\item for $a,b\in R$, we have $a\sim b$ if and only if $b\in \overline{\{a\}}$.
	\item The relation $\sim$ on $R$ is an equivalence relation.
	\item If $b\in C(a)$ then, for any $l\in \NN$, $b^l\in C(a^l)$.
	\item For $a\in R$, the graph $C(a)$ is connected.
	\item A subset $S$ of $R$ is closed and connected if and only if it is the closure of an element of $R$.
	\item If $\overline{S}$ is the closure of a subset $S\subseteq R$ then $diam(S)=diam(\overline{S})$.
\end{enumerate}

\end{theorem}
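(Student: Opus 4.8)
The unifying idea is that $\overline{\{a\}}$ is nothing but the $\sim$-equivalence class of $a$: unravelling the description $\overline{\{a\}}=\bigcup_{i\ge 0}\{a\}_i$ from the introduction, an element $b$ lies in $\overline{\{a\}}$ precisely when $b\sim_i a$ for some $i$, i.e. when $b\sim a$. Once this is in place, (1) is just the assertion that $\sim$ is symmetric, which I would prove by induction on the index: $\sim_1$ is visibly symmetric since $a=cd,\ b=dc$ says the same thing as $b=dc,\ a=cd$, and if $\sim_n$ is symmetric then so is $\sim_{n+1}$ by peeling off one step of the inductive definition. For (2) I would add reflexivity, witnessed by $a=1\cdot a=a\cdot 1$ so that $a\sim_1 a$, and transitivity, which follows by concatenating the two chains supplied by Proposition \ref{First basic prpoerties of connected elements}(1): a chain realizing $a\sim_m b$ followed by one realizing $b\sim_n c$ is a chain realizing $a\sim_{m+n}c$.

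For (3), given $b\in C(a)$ we have $a\sim b$, hence $a\sim_m b$ for some $m$; because $1\in R$ we may pad the chain (using $a\sim_1 a$) so that $m>l$, and then Proposition \ref{First basic prpoerties of connected elements}(4) yields $a^l\sim_q b^l$ with $q=\lceil m/l\rceil$, so $b^l\in\overline{\{a^l\}}=C(a^l)$. Part (4) is immediate from the equivalence-class picture together with the definition of the edges: two vertices $x,y\in C(a)$ are both $\sim a$, hence $x\sim y$, so a chain $x\sim_1 w_1\sim_1\cdots\sim_1 y$ exists; every intermediate $w_i$ is again $\sim a$ and therefore lies in $C(a)$, and consecutive distinct terms are adjacent by definition, so this chain is a path in $C(a)$ and the graph is connected.

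Part (5) I would prove by double inclusion after fixing any $a\in S$ (the set being nonempty). If $S$ is commutatively closed then, by minimality of the closure, $\overline{\{a\}}\subseteq S$; conversely connectedness of $S$ produces, for each $b\in S$, a path from $a$ to $b$, whence $a\sim b$ and $b\in\overline{\{a\}}$ by (1), so $S=\overline{\{a\}}$. The reverse implication is exactly (4) together with the fact that a closure is commutatively closed. Finally, (6) is the payoff of the equivalence-class description: since $S\subseteq\overline{S}$ we have $diam(S)\le diam(\overline{S})$ trivially, and for the reverse every $b\in\overline{S}$ satisfies $b\sim s$ for some $s\in S$, so by (1)--(2) the classes coincide, $C(b)=\overline{\{b\}}=\overline{\{s\}}=C(s)$, giving $diam(C(b))=diam(C(s))\le diam(S)$; taking the supremum over $b\in\overline{S}$ forces $diam(\overline{S})\le diam(S)$.

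The genuinely load-bearing step is the identification of $\overline{\{a\}}$ with the $\sim$-class of $a$, since it drives (1), (5) and (6) at once; the only place needing real care is (3), where one must invoke Proposition \ref{First basic prpoerties of connected elements}(4) and legitimately enlarge the index $m$ past $l$ before applying it. Everything else is bookkeeping with the inductive definition of $\sim_n$ and the definitions of the graph, distance, and diameter.
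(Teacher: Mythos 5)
Your proof is correct, and there is nothing in the paper to diverge from: the authors explicitly leave this proof to the reader, and your identification of $\overline{\{a\}}=\bigcup_i\{a\}_i$ with the $\sim$-equivalence class of $a$ is exactly the intended mechanism, as it is what drives (1), (5) and (6). The one step to tighten is the symmetry induction: the inductive definition composes the $\sim_1$-step on the left only, so from $b\sim_n c$ and $c\sim_1 a$ you cannot directly "peel off" to get $b\sim_{n+1}a$ without also knowing $\sim_n\circ\sim_1\subseteq\sim_{n+1}$. The cleanest repair is to note that $a\sim_n b$ is equivalent to the existence of a chain $a\sim_1 e_1\sim_1\cdots\sim_1 e_{n-1}\sim_1 b$ (this is precisely Proposition \ref{First basic prpoerties of connected elements}(1), which you already invoke for transitivity), and then reverse the chain term by term using the evident symmetry of $\sim_1$; with that in place the rest of your argument goes through verbatim.
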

\begin{proof}
	We leave the easy proof to the reader.	
\end{proof}
\begin{proposition}
\label{3}
Let $a$ be an element in a ring $R$.  If $n\in \mathbb{N}$ is the smallest integer such that $\overline{a}=\{a\}_{n}$, then  $n\leq diam_{R}\big(C(a)\big)\leq 2n$.
\end{proposition}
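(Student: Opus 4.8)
The plan is to read off both bounds directly from the ascending chain $\{a\}_0 \subseteq \{a\}_1 \subseteq \cdots$, using the elementary fact that $x \in \{a\}_i$ is exactly the statement $x \sim_i a$, so that membership in the chain records the distance to $a$. Throughout I would use two features of the relations $\sim_i$. First, $\sim_i$ is symmetric: a single step $a = cd$, $b = dc$ reverses by swapping $c$ and $d$, and a length-$i$ chain reverses termwise. Second, the relations concatenate: a length-$m$ chain from $x$ to $a$ followed by a length-$k$ chain from $a$ to $y$ is a length-$(m+k)$ chain from $x$ to $y$, so $x \sim_m a$ and $a \sim_k y$ give $x \sim_{m+k} y$. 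Both facts are immediate from the chain description in Proposition \ref{First basic prpoerties of connected elements}(1).

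For the lower bound I would exploit the minimality of $n$. Since $n$ is the least integer with $\{a\}_n = \overline{\{a\}}$, the previous inclusion is strict, $\{a\}_{n-1} \subsetneq \{a\}_n$, so I can choose a witness $x_0 \in \{a\}_n \setminus \{a\}_{n-1}$. Then $x_0 \sim_n a$ but $x_0 \not\sim_{n-1} a$, which says precisely that $d(x_0, a) = n$. As both $a$ and $x_0$ lie in $C(a)$, the definition of the diameter as a supremum of distances forces $diam_R(C(a)) \geq d(x_0, a) = n$.

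For the upper bound I would take arbitrary vertices $x, y \in C(a) = \{a\}_n$. By definition of the chain, both satisfy $x \sim_n a$ and $y \sim_n a$; symmetry turns the latter into $a \sim_n y$, and concatenation then yields $x \sim_{2n} y$, whence $d(x,y) \leq 2n$. Taking the supremum over all such pairs gives $diam_R(C(a)) \leq 2n$, completing the two-sided estimate.

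There is no serious obstacle here: the argument is essentially bookkeeping on the stabilization index of the chain. The one point that deserves care is the equality $d(x_0,a)=n$ in the lower bound, where one must invoke the minimality of $n$ to guarantee the strict inclusion $\{a\}_{n-1}\subsetneq\{a\}_n$, and hence the existence of an element at distance \emph{exactly} $n$ from $a$ rather than merely at distance $\leq n$. (The degenerate case $n=0$, in which $\overline{\{a\}}=\{a\}$ and all three quantities vanish, is trivially consistent.)
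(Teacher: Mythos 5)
Your proposal is correct and follows essentially the same route as the paper: the upper bound is the triangle inequality through $a$ (which you re-derive explicitly via chain concatenation rather than citing the metric property of $d$), and the lower bound comes from minimality of $n$, which you spell out slightly more carefully than the paper by exhibiting a witness $x_0\in\{a\}_n\setminus\{a\}_{n-1}$ at distance exactly $n$ from $a$. No gaps; the extra detail on the strict inclusion is a welcome clarification of the paper's one-line appeal to minimality.
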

\begin{proof}
We know that the distance between $a$ and every other element of $\{a\}_n$ is at most $n$. Now, if $b,c\in C(a)$ then $d(b,c)\le d(b,a)+d(a,c)\le 2n$.  So that $ diam_{R}\big(C(a)\big)\le 2n$.  The fact that $n$ is minimal such that $\overline{a}=\{a\}_{n}$ implies that $n\le diam_{R}\big(C(a)\big)$.
\end{proof}
\begin{lemma}
\label{4}
Let $R$ and $S$ be two rings, and $(a,b) \in R\times S$. Then $\overline{\{(a,b)\}}=\overline{\{a\}} \times \overline{\{b\}}$.
\end{lemma}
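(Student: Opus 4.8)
The plan is to prove the two inclusions separately, using the componentwise description of multiplication in $R \times S$ together with the chain characterization of $\sim_n$ from Proposition \ref{First basic prpoerties of connected elements}(1). Throughout I would write elements of $R \times S$ as pairs, so that a single factorization $(x,y) = (c_1,c_2)(d_1,d_2)$ is equivalent to the two factorizations $x = c_1 d_1$ in $R$ and $y = c_2 d_2$ in $S$, and likewise an equality $(r_1,s_1)(p_1,q_1)=\dots$ splits into its two coordinates.

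For the inclusion $\overline{\{(a,b)\}} \subseteq \overline{\{a\}} \times \overline{\{b\}}$, I would take $(x,y) \sim_n (a,b)$ and apply the two canonical projections $\pi_R : R \times S \to R$ and $\pi_S : R \times S \to S$. Since each elementary step $(u,v) \sim_1 (u',v')$, witnessed by a factorization in $R \times S$, projects to elementary steps $u \sim_1 u'$ in $R$ and $v \sim_1 v'$ in $S$, an easy induction on $n$ (or a direct reading of the chain in Proposition \ref{First basic prpoerties of connected elements}(1) coordinatewise) gives $x \sim_n a$ and $y \sim_n b$, whence $x \in \overline{\{a\}}$ and $y \in \overline{\{b\}}$ by Theorem \ref{The graph C(a)}(1). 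This is just the functoriality of the closure under the projection homomorphisms, in the spirit of Theorem \ref{0.1}(1).

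For the reverse inclusion, take $x \in \overline{\{a\}}$ and $y \in \overline{\{b\}}$, so that $x \sim_m a$ and $y \sim_k b$ for some $m,k$. The aim is to splice the two chains into one chain in $R \times S$ joining $(x,y)$ to $(a,b)$, by forming the componentwise pairs of the two sequences supplied by Proposition \ref{First basic prpoerties of connected elements}(1). Here the main (though mild) obstacle appears: the chains may have different lengths, so they cannot be paired step-by-step as they stand. I would resolve this using unitality: any element $t$ satisfies $t = t\cdot 1 = 1\cdot t$, so $t \sim_1 t$, and appending such trivial steps lengthens the shorter chain until both reach the common length $N = \max(m,k)$. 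Pairing the two length-$N$ chains coordinatewise then yields a length-$N$ chain in $R \times S$ from $(x,y)$ to $(a,b)$, so $(x,y) \sim_N (a,b)$ and hence $(x,y) \in \overline{\{(a,b)\}}$; combining the inclusions gives the asserted equality.

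An alternative route for the reverse inclusion that avoids bookkeeping with $\max(m,k)$ is to pass through the intermediate point $(x,b)$: padding the second coordinate by the constant data $q_i = b,\ s_i = 1$ turns the chain for $x \sim_m a$ into a chain showing $(x,b) \sim_m (a,b)$, and symmetrically padding the first coordinate gives $(x,y) \sim_k (x,b)$; then $(x,y) \sim (a,b)$ follows from transitivity of $\sim$ (Theorem \ref{The graph C(a)}(2)). Either way, the only real content is the length-matching, since the componentwise structure of $R \times S$ makes both the splitting and the recombination of factorizations completely formal.
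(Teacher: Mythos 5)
Your proposal is correct and follows essentially the same route as the paper: the forward inclusion by reading each chain coordinatewise (equivalently, projecting), and the reverse inclusion by pairing the two chains after padding the shorter one with trivial steps $t = t\cdot 1 = 1\cdot t$, which is exactly the paper's device of holding the first coordinate constant at $a$ once its chain has terminated. The alternative you sketch via the intermediate point $(x,b)$ and transitivity is a harmless repackaging of the same idea.
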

\begin{proof}
Let $(c,d) \in \overline{\{(a,b)\}}$. Thus $(c,d) \sim_{n} (a,b)$, for some $n \geq 0$. So there are $(c_{1},d_{1}), \dots, (c_{n},d_{n}) \in R\times S$ such that $(c, d)\sim_{1} (c_{1}, d_{1})\sim_{1} \dots \sim_{1} (c_{n}, d_{n})=(a,b)$. One can easily see that $c \sim_{n} a$ and $d \sim_{n} b$. Hence $(c, d) \in \overline{\{a\}} \times \overline{\{b\}}$.
\par Now, let $(c, d) \in \overline{\{a\}} \times \overline{\{b\}}$. Since $c \in \overline{\{a\}}$ and $d\in \overline{\{b\}}$, thus there are $m, n \in \mathbb{N}$ such that $c \sim_{m} a$ and $d \sim_{n} b$. So we have $c \sim_{1} c_{1} \sim_{1} \dots \sim_{1} c_{m}=a$ and $d\sim_{1} d_{1} \sim_{1} \dots \sim_{1} d_{n}=b$, for some $c_{1}, \dots, c_{m} \in R$ and $d_{1}, \dots, d_{n} \in S$. We may assume that $m< n$. Thus we have $(c,d) \sim_{1} (c_{1},d_{1})\sim_{1} \dots \sim_{1} (c_{m}, d_{m})=(a, d_{m})\sim_{1} (a, d_{m+1}) \sim_{1} \dots \sim_{1} (a, d_{n})=(a,b)$. Hence $(c,d) \sim_{n} (a,b)$, and so $(c, d) \in \overline{\{(a,b)\}}$.   
\end{proof}
\begin{proposition}
\label{diameter of a product}
Let $R, S$ be two rings, and let $diam(R)=d_{1}$ and $diam(S)=d_{2}$. Then $diam(R\times S)=max\{d_{1}, d_{2}\}$.
\end{proposition}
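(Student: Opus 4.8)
The plan is to reduce everything to a single distance formula inside a product and then take suprema. By Lemma \ref{4} we have $\overline{\{(a,b)\}} = \overline{\{a\}} \times \overline{\{b\}}$ for every $(a,b) \in R \times S$, so the vertices of $C\big((a,b)\big)$ are exactly the pairs $(x,y)$ with $x \in \overline{\{a\}}$ and $y \in \overline{\{b\}}$. The heart of the argument is to show that the distance in the product splits as
$$ d\big((x_1,y_1),(x_2,y_2)\big) = \max\{\, d(x_1,x_2),\, d(y_1,y_2)\,\} $$
for all such pairs. Two observations drive this: first, $\sim_1$ is compatible with the product coordinatewise, and second, $R$ and $S$ being unital forces $z \sim_1 z$ for every element $z$, which lets me pad chains to a common length.

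For the inequality ``$\ge$'' I would project. If $(u_1,v_1) \sim_1 (u_2,v_2)$ in $R \times S$, say via $(p,q)$ and $(r,s)$, then $u_1 = pr$, $u_2 = rp$, $v_1 = qs$, $v_2 = sq$, so $u_1 \sim_1 u_2$ in $R$ and $v_1 \sim_1 v_2$ in $S$. Iterating, $(x_1,y_1) \sim_k (x_2,y_2)$ implies $x_1 \sim_k x_2$ and $y_1 \sim_k y_2$; taking $k = d\big((x_1,y_1),(x_2,y_2)\big)$ yields $d(x_1,x_2) \le k$ and $d(y_1,y_2) \le k$, hence $\max\{d(x_1,x_2), d(y_1,y_2)\} \le k$. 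For the reverse inequality ``$\le$'' I would build a product chain of length $N := \max\{m,n\}$, where $m = d(x_1,x_2)$ and $n = d(y_1,y_2)$. Realizing these distances by chains $x_1 \sim_1 \cdots \sim_1 x_2$ of length $m$ and $y_1 \sim_1 \cdots \sim_1 y_2$ of length $n$, I extend the shorter one to length $N$ by appending reflexive steps (using $z = z \cdot 1 = 1 \cdot z$). Since a pair of coordinatewise steps $x \sim_1 x'$ (via $c,d$) and $y \sim_1 y'$ (via $e,f$) assembles into the single product step $(x,y) \sim_1 (x',y')$ via $(c,e)$ and $(d,f)$, stacking the two synchronized chains coordinatewise gives $(x_1,y_1) \sim_N (x_2,y_2)$, whence $d\big((x_1,y_1),(x_2,y_2)\big) \le N$.

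With the distance identity established, the rest is a pair of $\sup$--$\max$ interchanges in $\mathbb{N} \cup \{\infty\}$. Because $x_1,x_2$ range over $\overline{\{a\}}$ independently of $y_1,y_2$ over $\overline{\{b\}}$, and $\sup_{i,j}\max\{f(i),g(j)\} = \max\{\sup_i f(i),\, \sup_j g(j)\}$, the diameter of $C\big((a,b)\big)$ equals $\max\{diam(C(a)),\, diam(C(b))\}$. Taking the supremum over all $a \in R$ and $b \in S$ and applying the same interchange once more gives $diam(R \times S) = \max\{d_1, d_2\}$. The one delicate point is the ``$\le$'' half of the distance identity: I must be sure that padding the shorter chain with reflexive steps neither changes the endpoint of that coordinate nor disturbs the optimal length already realized in the other coordinate, which is exactly where both the minimality built into $d$ and the unital reflexivity $z \sim_1 z$ are needed. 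Everything else is formal and survives verbatim when one or both diameters are infinite.
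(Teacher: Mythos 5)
Your proof is correct and follows essentially the same route as the paper: both rest on Lemma \ref{4} and on padding the shorter coordinate chain with reflexive steps $z \sim_1 z$ (available because the rings are unital) to assemble a product chain of length $\max\{d(x_1,x_2),d(y_1,y_2)\}$. You actually go slightly further than the paper, whose proof establishes only the upper bound $d\big((a_1,b_1),(a_2,b_2)\big)\le \max\{d_1,d_2\}$ and then asserts equality: your projection argument, splitting a product step $(u_1,v_1)\sim_1(u_2,v_2)$ into coordinate steps, supplies the reverse inequality and hence the lower bound $diam(R\times S)\ge \max\{d_1,d_2\}$ that the paper leaves implicit.
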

\begin{proof}
Assume that $(a,b)\in R\times S$. For every $(a_{1}, b_{1}), (a_{2},b_{2}) \in \overline{\{(a,b)\}}$, we have $a_1, a_2 \in \overline{\{a\}}$ and $b_1, b_2 \in \overline{\{b\}}$, by Lemma \ref{4}.
Since $diam(R)=d_{1}$ and $diam(S)=d_{2}$, then $ d(a_{1}, a_{2})\leq d_{1}$ and $ d(b_{1}, b_{2})\leq d_{2}$. Thus there exist $t\leq d_1$ and $s\leq d_2$ such that  $a_1 \sim_{t} a_2$ and $b_1 \sim_{s} b_2$. Then we have $a_1 \sim_{1} c_{1} \sim_{1} \dots \sim_{1} c_{t}=a_2$ and $b_1\sim_{1} v_{1} \sim_{1} \dots \sim_{1} v_{s}=b_2$, for some $c_{1}, \dots, c_{t} \in R$ and $v_{1}, \dots, v_{s} \in S$. Let $t< s$. We have $(a_1, b_1) \sim_{1} (c_{1},v_{1})\sim_{1} \dots \sim_{1} (c_{t}, v_{t})=(a_2, v_{t})\sim_{1} (a_2, v_{t+1}) \sim_{1} \dots \sim_{1} (a_2, v_{s})=(a_2,b_2)$. Hence $d\big((a_{1}, b_{1}), (a_{2}, b_{2})\big) \leq max\{d_{1}, d_{2}\}$. Therefore $diam(R\times S)=max\{d_{1}, d_{2}\}$.
\end{proof}
\begin{remark}
{\rm
In the above proposition, we showed that if $R$ and $S$ are two rings with finite diameter, then the diameter of $R\times S$ is also finite. 

Let us remark that is easy to construct elements $a,b,c,d$ in 	a ring such that $a\sim_{1}b$ and $c\sim_{1}d$, but there is no path between $ac$ and $bd$. For instance, consider the free algebra $K<X,Y>$ where $K$ is a field.
Let $a=XY,b=YX, c=d=X^2Y$.  We leave to the reader to check that there is no path from  $ac=XYX^2Y$ to $bd=YX^3Y$.
}
\end{remark}
\begin{lemma}
\label{6}
If $R$ is a Dedekind finite ring and $a \in U(R)$, then $diam_{R}(C(a))=1$.
\end{lemma}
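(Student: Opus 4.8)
The plan is to pin down $\overline{\{a\}}$ exactly and then show that the associated graph is complete. Since $R$ is Dedekind finite and $a\in U(R)$, Proposition \ref{0.2}(1) applies and gives $\overline{\{a\}}=\{uau^{-1}\mid u\in U(R)\}$, the conjugacy class of $a$ under the unit group. This is the only place the Dedekind finite hypothesis is used, and it is essential: one always has $uau^{-1}\sim_1 a$ (take $c=ua$, $d=u^{-1}$, so that $cd=uau^{-1}$ and $dc=a$), so the unit conjugates always lie in $\overline{\{a\}}$ regardless of any hypothesis. What Dedekind finiteness buys is the reverse inclusion, namely that nothing else belongs to the closure, so that the distance estimate below covers every pair of vertices.

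The core of the proof is then a single explicit factorization showing that any two vertices are adjacent. Given $x=uau^{-1}$ and $y=vav^{-1}$ with $u,v\in U(R)$, I would put $c=uav^{-1}$ and $d=vu^{-1}$; both are well defined since $u,v$ are units. A direct computation yields $cd=uav^{-1}vu^{-1}=uau^{-1}=x$ and $dc=vu^{-1}uav^{-1}=vav^{-1}=y$, so $x\sim_1 y$ by the very definition of $\sim_1$. Hence $d(x,y)\le 1$ for every pair $x,y\in\overline{\{a\}}$.

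It then follows that $diam_R\big(C(a)\big)\le 1$; and since distinct elements are at distance at least $1$, the value $1$ is attained as soon as $\overline{\{a\}}$ has more than one element, giving $diam_R\big(C(a)\big)=1$. The only nonroutine step is discovering the factorization $c=uav^{-1}$, $d=vu^{-1}$; everything else is a mechanical verification. The guiding idea makes the choice transparent: the relation $\sim_1$ lets one conjugate by an arbitrary unit $t$ (choose $d=t$ and $c=xt^{-1}$, giving $dc=txt^{-1}$), and taking $t=vu^{-1}$ carries $x=uau^{-1}$ precisely to $y=vav^{-1}$.
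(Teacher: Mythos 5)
Your proof is correct and follows essentially the same route as the paper: both invoke Proposition \ref{0.2}(1) to identify $\overline{\{a\}}$ with the unit conjugacy class of $a$, and then observe that any two unit conjugates are adjacent, your explicit factorization $c=uav^{-1}$, $d=vu^{-1}$ being exactly the computation behind the paper's assertion that $b=uv^{-1}cvu^{-1}$ implies $b$ and $c$ are adjacent. Your added remark that the value $1$ requires $\overline{\{a\}}$ to contain more than one element is in fact a point of care the paper silently skips (for a central unit such as $a=1$ the graph is a single vertex and the diameter is $0$).
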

\begin{proof}
Since $R$ is Dedekind finite and $a \in U(R)$, thus $\overline{\{a\}}=\{uau^{-1} ~|~ u \in U(R) \}$, by Proposition \ref{0.2}. So for every $b, c \in \overline{\{a\}}$, there exist $u, v \in U(R)$ such that $b=uau^{-1}$ and $c=vav^{-1}$. Hence $b=uv^{-1}cvu^{-1}$. Thus $b$ and $c$ are adjacent. Therefore $diam_{R}(C(a))=1$.
\end{proof}
\begin{proposition}
1) $R$ is commutative if and only  if $diam(R)=0$.
	
\noindent 2) Let $R$ be a division ring. Then $diam(R)=1$.

\end{proposition}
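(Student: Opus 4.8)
The plan is to treat the two parts separately: part (1) follows directly from the definitions of $\sim_1$ and the closure, while part (2) reduces to Lemma \ref{6} together with the conjugacy description of closures in Proposition \ref{0.2}.

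For part (1), I would first prove the forward implication. If $R$ is commutative and $a\in R$, then whenever $a=cd$ we have $dc=cd=a$; hence the only element $\sim_1$-related to $a$ is $a$ itself, so $\{a\}_1=\{a\}$ and therefore $\overline{\{a\}}=\{a\}$. A singleton graph has diameter $0$, so $diam(C(a))=0$ for every $a$, giving $diam(R)=0$. For the converse, suppose $diam(R)=0$. Then $diam(C(a))=0$ for every $a$, which forces $\overline{\{a\}}=\{a\}$, since two distinct vertices would already be at distance at least $1$. Now take arbitrary $c,d\in R$ and set $a=cd$. Since $cd\sim_1 dc$, the element $dc$ lies in $\{a\}_1\subseteq\overline{\{a\}}=\{a\}$, so $dc=cd$. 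As $c,d$ were arbitrary, $R$ is commutative.

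For part (2), the key observation is that a division ring is a domain, hence Dedekind finite, and every nonzero element is a unit. I would first dispose of $a=0$: in a domain $0=cd$ forces $c=0$ or $d=0$, so $dc=0$ as well, whence $\overline{\{0\}}=\{0\}$ and $diam(C(0))=0$. For a nonzero $a$ we have $a\in U(R)$, so Proposition \ref{0.2} gives $\overline{\{a\}}=\{uau^{-1}\mid u\in U(R)\}$, and the argument of Lemma \ref{6} shows that any two elements of this conjugacy class are adjacent; thus $diam(C(a))\le 1$ for every $a$, and consequently $diam(R)\le 1$.

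It remains to show that the bound is attained, i.e. $diam(R)=1$ rather than $0$, and this is the only delicate point. Here one must read the statement for a noncommutative division ring, since for a commutative one part (1) already yields $diam(R)=0$. Noncommutativity provides a noncentral element $a$; then some conjugate $uau^{-1}$ differs from $a$, so $\overline{\{a\}}$ contains at least two distinct vertices and $diam(C(a))=1$. Combined with the upper bound, this gives $diam(R)=1$. The main obstacle is therefore not the upper bound, which is an immediate consequence of Lemma \ref{6}, but correctly identifying that the supremum equals $1$ exactly when a nontrivial conjugacy class exists, which is precisely the noncommutative case.
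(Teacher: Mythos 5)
Your proof is correct and follows essentially the same route as the paper: part (1) is read off directly from the definitions, and part (2) is reduced to Lemma \ref{6} (equivalently, the conjugacy description in Proposition \ref{0.2}). Your extra care --- treating $C(0)$ separately (Lemma \ref{6} only covers units) and noting that the value $1$ is attained precisely when the division ring is noncommutative, since a field has diameter $0$ by part (1) --- fills in details the paper's two-line proof leaves implicit, but it is the same approach.
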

\begin{proof}
1) The first statement is a direct consequence of the definition.

\noindent 2) The second statement is easily obtained from Lemma \ref{6}.
\end{proof}


\begin{theorem}
If $R$ is not Dedekind-finite, then $diam(R)=\infty$. 
\end{theorem}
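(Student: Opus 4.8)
The plan is to exploit the standard fact that a non-Dedekind-finite ring carries an infinite family of matrix units, and then to produce, for every $m$, a nilpotent element whose commutatively closed graph has diameter at least $m-1$; letting $m\to\infty$ forces $diam(R)=\infty$.

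First I would fix the combinatorial scaffolding. Since $R$ is not Dedekind-finite, choose $a,b \in R$ with $ab = 1$ but $ba \neq 1$, set $p = 1 - ba \neq 0$, and define $e_{ij} = b^i p a^j$ for $i,j \ge 0$. Using $ap = 0$, $pb = 0$, $p^2 = p$ and $a^i b^i = 1$ one verifies the matrix-unit relations $e_{ij}e_{kl} = \delta_{jk}e_{il}$, and since $a^i e_{ij} b^j = p \neq 0$ every $e_{ij}$ is nonzero. For each $m \ge 1$ put
\[ J^{(m)} = \sum_{i=0}^{m-2} e_{i,i+1}. \]
A direct computation gives $(J^{(m)})^{k} = \sum_{i=0}^{m-1-k} e_{i,i+k}$, so $(J^{(m)})^{m-1} = e_{0,m-1} \neq 0$ while $(J^{(m)})^{m} = 0$; thus $J^{(m)}$ is nilpotent of index exactly $m$.

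The key constructive step is a one-step reduction $J^{(m)} \sim_1 J^{(m-1)}$. Taking $c = \sum_{i=0}^{m-2} e_{ii}$ and $d = J^{(m)}$, the matrix-unit relations give $cd = J^{(m)}$ and $dc = \sum_{j=0}^{m-3} e_{j,j+1} = J^{(m-1)}$, so indeed $J^{(m)} = cd \sim_1 dc = J^{(m-1)}$. Iterating, $J^{(m)} \sim_1 J^{(m-1)} \sim_1 \cdots \sim_1 J^{(1)} = 0$, whence $J^{(m)} \sim_{m-1} 0$; in particular $0 \in \overline{\{J^{(m)}\}}$ and $d(J^{(m)},0) \le m-1$.

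It remains to bound this distance from below, which is where the argument really bites. Writing $\nu(x)$ for the least $k$ with $x^k = 0$, I claim $\nu$ changes by at most $1$ across a single adjacency: if $u = cd$ and $v = dc$, then $v^{k+1} = d(cd)^k c$, so $u^k = 0$ forces $v^{k+1} = 0$, giving $\nu(v) \le \nu(u)+1$, and symmetrically $\nu(u) \le \nu(v)+1$. Consequently, along any chain realizing $J^{(m)} \sim_N 0$ the index drops by at most $1$ per step, so $1 = \nu(0) \ge \nu(J^{(m)}) - N = m - N$, i.e. $N \ge m-1$. Hence $d(J^{(m)},0) = m-1$, so $diam\big(C(J^{(m)})\big) \ge m-1$, and letting $m \to \infty$ yields $diam(R) = \infty$. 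The main obstacle is isolating an invariant of $\sim$ that certifies large distances rather than merely exhibiting long chains; the nilpotency index works precisely because it is $1$-Lipschitz for $\sim_1$, which converts the length of a reducing chain into a genuine lower bound on the distance.
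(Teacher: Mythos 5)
Your proposal is correct and follows essentially the same route as the paper: the same matrix units $e_{ij}=b^i(1-ba)a^j$, the same nilpotent element of index $m$ (the paper's $A=e_{12}+\cdots+e_{n-1,n}$), the same explicit chain via multiplication by $\sum e_{ii}$ for the upper bound $d(J^{(m)},0)\le m-1$, and a lower bound that amounts to the same fact the paper extracts from its Proposition \ref{First basic prpoerties of connected elements}(3), namely that $x\sim_N 0$ forces $x^{N+1}=0$. The only cosmetic difference is that you re-derive this bound inline via the one-step estimate $\nu(dc)\le\nu(cd)+1$ instead of citing the proposition.
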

\begin{proof}
Assume that $R$ is not Dedekind-finite. Thus there exist $a, b \in R$ such that $ab=1$ but $ba\neq  1$. So we have nonzero element $e_{ij}=b^{i}(1-ba)a^{j}$. Consider $A=e_{12}+e_{23}+\dots+e_{n-1,n}$. One can easily check that $A^{n}=0$ and $A^{n-1}=e_{1n}\neq 0$, since $ba\neq 1$. 
Write $A=(e_{11}+e_{22}+\dots e_{n-1,n-1})A \sim_1 A(e_{11}+e_{22}+\dots e_{n-1,n-1})=e_{12}+e_{23}+\dots+e_{n-2,n-1}$.   Continuing this process, we conclude $A\in \{0\}_{n-1}$. We claim that $A\notin \{0\}_{n-k}$ for $1<k<n$. Let $A\in \{0\}_{n-k}$. Thus $A\sim_{n-k} 0$. By Proposition \ref{First basic prpoerties of connected elements} (3), there exist $X, Y \in R$ such that $A^{(n-k)+l}=X(0)^{l}Y$ ($\forall l\in \mathbb{N}$). If we put $l=k+1$, then $A^{n-1}=0$, which is a contradiction. Hence $d(0, A)=n-1$ for every $n \in \mathbb{N}$. Therefore $diam(R)=\infty$.
\end{proof}



\section{Constructing the closure of a subset of $R$}

Let us first remark that the commutative closure $\overline{S}$ of a subset
$S \subseteq R$ is the union of the commutative closure $\overline{\{s\}}$ of its elements $s\in S$.   On the other hand to construct the commutative closure of an element we need to use factorizations of this element.
We will use the following tools to build elements of $\overline{s}$, for $s\in R$.
\begin{itemize}
	\item For any $a\in R$, we have $a\sim_1 a(1+b)$ (resp. $a\sim_1 (1+b)a$) for any $b\in l(a)$ (resp. $b\in r(a)$).
	\item Somewhat more general than the above point, let us remark that if $xb=0$ we always have $xy \sim_1(y+b)x$ (resp. if $c, y\in R$ are such that $cy=0$ then $xy\sim_1 y(x+c)$).  We even have, for $a=xy^n$, that $(y+r(x))^nx\in \{a\}_n$ (and for $b=x^ny$ we have $y(x+l(y))^n\in \{b\}_n$).
	\item For $a,b\in R$ we have $a(1+ba)\sim_1 a(1+ab)$.
\end{itemize}

We leave the short proofs of these statements to the reader and start to apply them to different cases.
\begin{example}
	{\rm 
Let $R=M_{2}(k)$ and $A, B\in R$. Also let $A=\left(
\begin{array}{cc}
1 & 0  \\
0 & 0
\end{array} \right)$ and $B=\left(
\begin{array}{cc}
0 & 0  \\
1 & 1 
\end{array} \right)$. Thus $AB=0$ and $BA=\left(
\begin{array}{cc}
0 & 0  \\
1 & 0
\end{array} \right)$. Since $1+AB=I$ and $1+BA=\left(
\begin{array}{cc}
1 & 0  \\
1 & 1 
\end{array} \right)$, so $1+BA\notin \overline{\{1+AB\}}=\overline{\{I\}}=\{I\}$, as wanted.
}
\end{example}

\begin{lemma}
	\label{class of a conjugate}
	Let $R$ be any ring, and let $a, b\in R$ and $u\in U(R)$ such that $b=uau^{-1}$. Then, for any $n\ge 1$, $\{a\}_n=\{b\}_n$.
\end{lemma}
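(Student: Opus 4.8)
The plan is to reduce everything to the factorization characterization of $\sim_n$ given in Proposition \ref{First basic prpoerties of connected elements}(1) and to perform a small surgery on the last link of the connecting chain. Observe first that the hypothesis $b=uau^{-1}$ is symmetric in $a$ and $b$: it is equivalent to $a=u^{-1}b(u^{-1})^{-1}$, with $u^{-1}\in U(R)$. Hence it suffices to prove a single inclusion, say $\{a\}_n\subseteq\{b\}_n$, for an arbitrary unit $u$; applying the result with $u^{-1}$ in place of $u$ then yields $\{b\}_n\subseteq\{a\}_n$, and the equality follows.

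To prove $\{a\}_n\subseteq\{b\}_n$, I would take $x\in\{a\}_n$, i.e. $x\sim_n a$, and invoke Proposition \ref{First basic prpoerties of connected elements}(1) to obtain sequences $x_1,\dots,x_n$ and $y_1,\dots,y_n$ in $R$ with
\[
x=x_1y_1,\quad y_1x_1=x_2y_2,\quad\dots,\quad y_{n-1}x_{n-1}=x_ny_n,\quad y_nx_n=a.
\]
The idea is to leave this entire chain untouched except for its final pair, replacing it so that the terminal product becomes $uau^{-1}=b$ while the penultimate product is preserved. Concretely, set $x_n'=x_nu^{-1}$ and $y_n'=uy_n$. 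Then $x_n'y_n'=x_nu^{-1}uy_n=x_ny_n$, so the relation $y_{n-1}x_{n-1}=x_n'y_n'$ still holds, whereas $y_n'x_n'=uy_nx_nu^{-1}=uau^{-1}=b$. Thus $x_1,\dots,x_{n-1},x_n'$ and $y_1,\dots,y_{n-1},y_n'$ witness $x\sim_n b$, giving $x\in\{b\}_n$.

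This argument is uniform in $n\ge 1$: for $n=1$ there is no intermediate relation to preserve, and the only conditions to verify are $x_1'y_1'=x_1y_1=x$ and $y_1'x_1'=uau^{-1}=b$, both of which follow from the same cancellation $u^{-1}u=1$. I do not expect any genuine obstacle, since the whole content is this one-step modification; the single point to get right is the placement of the unit so that its two copies cancel inside the product that must remain equal to the penultimate term (equivalently to $x$ when $n=1$), while conjugating the product that must become $b$. It is worth noting why the obvious alternative is insufficient: the map $\varphi_u(t)=utu^{-1}$ is a ring isomorphism and Theorem \ref{0.1}(2) gives $\{b\}_n=\varphi_u(\{a\}_n)=u\{a\}_nu^{-1}$, but this only expresses $\{b\}_n$ as a conjugate of $\{a\}_n$, not the on-the-nose equality claimed here. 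It is precisely the explicit surgery on the last link of the chain that forces the two sets to coincide literally.
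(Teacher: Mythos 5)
Your proof is correct and uses essentially the same idea as the paper: a one-link surgery on the factorization chain from Proposition \ref{First basic prpoerties of connected elements}(1), inserting $u^{-1}u$ so that one product is preserved while the other becomes the conjugate. The paper performs this modification on the first link of a chain starting at $b$ (writing $a=u^{-1}bu=(u^{-1}x_1)(y_1u)$ and noting $(y_1u)(u^{-1}x_1)=y_1x_1$), which is just the mirror image of your modification of the last link, and it likewise concludes by symmetry.
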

\begin{proof}
	Let $c$ be any element in $\{b\}_n$. According to Proposition 
	\ref{First basic prpoerties of connected elements}, there are 
	two sequence of elements $x_1,\dots,x_n,y_1,\dots,y_n$ in $r$ 
	such that $b=x_1y_1, \; y_1x_1=x_2y_2,\dots, y_nx_n=c$.
	we thus have $a=u^{-1}bu=(u^{-1}x_1)(y_1u)\sim_1y_1x_1\sim_{n-1}c$.  We thus conclude that $a\sim_n c$.  This yields the result.
\end{proof}
\begin{example}
	{\rm 
	Let $R= k\langle x,y,z,t \rangle /I$, where $k$ is a field and $I$ is the ideal generated by the element $xy-zt$. Then $yx,tz\in \{xy\}_1$ and  $d(yx,tz)=2$.
}
\end{example}

\begin{theorem}
	\label{}
	Let $k$ be a field and $R=K\left\langle x, y\right\rangle $ be the free $k$-algebra. Then $diam(R)$ is infinite.
\end{theorem}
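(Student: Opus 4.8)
The plan is to exhibit a family of elements $a_{d}\in R=k\langle x,y\rangle$ whose commutative closures have diameters tending to infinity; since $diam(R)=\sup_{a}diam\big(C(a)\big)$, this forces $diam(R)=\infty$. I would work with the total-degree grading $R=\bigoplus_{d\ge 0}R_{d}$, and the first step is a reduction lemma: if $a$ is homogeneous of degree $d$, then every element of $\overline{\{a\}}$ is again homogeneous of degree $d$. This holds because $R$ is a graded domain, so in any factorization $a=PQ$ the highest (resp.\ lowest) degrees occurring in $P$ and $Q$ must add up to $d$ in both cases; since the highest degree of a nonzero element is at least its lowest degree, this forces $P$ and $Q$ to be homogeneous. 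Hence each $\sim_{1}$-neighbour $QP$ of $a$ is homogeneous of degree $d$, and the claim follows by induction. Consequently $C(a)$ lives inside the finite-dimensional piece $R_{d}$, and it suffices to produce homogeneous elements whose closures have unbounded diameter as $d\to\infty$.

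The lower-bound engine will be Proposition \ref{First basic prpoerties of connected elements}(3), used exactly as in the proof that non-Dedekind-finite rings have infinite diameter. Suppose $a,b$ are homogeneous of degree $d$ and $a\sim_{n}b$. Then there exist $x,y$ with $ax=xb$, $ya=by$ and $a^{n}=xy$, $b^{n}=yx$. As $R$ is a domain and $a\ne 0$ we have $a^{n}\ne 0$, so $x,y\ne 0$; applying the reduction lemma to the factorization $a^{n}=xy$ of the homogeneous element $a^{n}$ shows that $x,y$ may be taken homogeneous with $\deg x+\deg y=nd$. In particular $x$ is a nonzero homogeneous solution of the intertwining equation $ax=xb$ with $\deg x\le nd$, so if $\delta(a,b)$ denotes the least degree of a nonzero solution of $ax=xb$, then $n\ge \delta(a,b)/d$, that is, $d(a,b)\ge \delta(a,b)/\deg a$. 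Thus it is enough to exhibit $\sim$-connected pairs $a_{d},b_{d}$ for which every nonzero intertwiner is forced to have degree growing faster than $\deg a_{d}$.

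The construction itself is where the real work lies. I would build $b_{d}$ from $a_{d}$ by an explicit (but long) sequence of elementary factor-swaps, for instance by iterating the moves listed at the start of Section~2 such as $(1+uv)u\sim_{1}u(1+uv)$, so that $b_{d}\in\overline{\{a_{d}\}}$ is connected to $a_{d}$ by construction; the content is then entirely in the lower bound, namely showing that \emph{no} short chain joins them. The hard part will be precisely this rigidity estimate. Free $k$-algebras have highly constrained factorizations (unique up to similarity), which makes single $\sim_{1}$-steps rigid: one checks that every monomial has closure-diameter $1$, so the family must be genuinely non-monomial and of growing degree, and the heart of the argument is to prove, via $d(a_{d},b_{d})\ge \delta(a_{d},b_{d})/\deg a_{d}$, that the minimal intertwiner degree $\delta(a_{d},b_{d})$ outgrows $\deg a_{d}$. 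I expect ruling out all short factor-swap chains — equivalently, the intertwiner-degree bound — to be the main obstacle, since it must exploit the precise domain and factorization structure of $k\langle x,y\rangle$ rather than any crude counting: a bound through additive commutators via Proposition \ref{First basic prpoerties of connected elements}(2) gives only weak information, because already sums of a few commutators fill most of each graded piece.
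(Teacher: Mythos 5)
Your preparatory material is sound: the graded-domain lemma (in $k\langle x,y\rangle$, both factors in any factorization of a nonzero homogeneous element are themselves homogeneous, by comparing top and bottom graded components, which multiply without cancellation in a domain) is correct, and combining it with Proposition \ref{First basic prpoerties of connected elements}(3) does give the valid inequality $d(a,b)\ge \delta(a,b)/\deg a$ for homogeneous $a\ne 0$, where $\delta(a,b)$ is the least degree of a nonzero intertwiner. But the proof stops exactly where it needed to start. You never produce the family $(a_d,b_d)$: you say you ``would build $b_d$ from $a_d$'' by elementary swaps, and you never prove a lower bound on $\delta(a_d,b_d)$ for any concrete pair; you yourself flag the rigidity estimate as ``the main obstacle.'' A framework plus an acknowledged missing core is not a proof. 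Worse, it is not even clear the framework \emph{can} close: to conclude you need connected homogeneous pairs whose minimal intertwiner degree grows faster than their own degree, and nothing in the proposal gives evidence such pairs exist. (For the pairs you can easily connect — e.g.\ cyclic rotations $uv\sim_1 vu$ of monomials — there is always the intertwiner $u$ of degree \emph{less} than $\deg a$, so the bound collapses; the same low-degree intertwiners tend to appear whenever the connecting chain is built from short factor-swaps, which is precisely how you propose to build $b_d$.)

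The paper avoids this difficulty entirely, and with non-homogeneous elements. It takes $a_l=x+yx^{l}$ and $b_l=x+x^{l}y$, exhibits the explicit chain $x+yx^{l}=(1+yx^{l-1})x\sim_1 x(1+yx^{l-1})=(1+xyx^{l-2})x\sim_1\cdots\sim_1 x(1+x^{l-1}y)=x+x^{l}y$ of length $l$, and gets the matching lower bound not from intertwiners but from rigidity of factorization: for $r,s\ge 1$ the only factorizations of $x+x^{s}yx^{r}$ into two factors are $x(1+x^{s-1}yx^{r})$ and $(1+x^{s}yx^{r-1})x$, so every path starting at $a_l$ inside the closure is forced to retrace the displayed chain, whence $d(a_l,b_l)=l$ exactly and $diam(R)=\infty$. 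If you want to salvage your write-up, the most direct repair is to abandon the homogeneous/intertwiner route and prove the factorization-uniqueness statement for $x+x^{s}yx^{r}$ (degree considerations on the two graded components $x$ and $x^{s}yx^{r}$ of a product $PQ$ do this quickly), then run the paper's argument.
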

\begin{proof}
%
%
%

	Consider $x, y \in R $. We show that for every $l \in \mathbb{N}$, $d(x+yx^{l} , x+x^{l}y)=l$. Since $x+yx^{l}=(1+yx^{l-1})x \sim_{1} x(1+yx^{l-1})=(1+xyx^{l-2})x \sim_{1} x(1+xyx^{l-2})=(1+x^{2}yx^{l-3})x\sim_{1} \dots \sim_{1} x(1+x^{l-1}y)=x+x^{l}y$, so $d(x+xyx^{l} , x(1+x^{l}y))\leq l$. 
	
	The path we just described between the two elements $x+xyx^{l}$ and $x+x^{l}y$ is the only path between these two elements.  This is a consequence of the fact that, for $r,s\ge 1$, the only factorization in $R$ of $x+x^syx^r$ are  $x+x^syx^r=x(1+x^{s-1}yx^r)=(1+x^syx^{r-1})x$.  We leave the arguments to the reader.
\end{proof}


\begin{example}
{\rm
Let $R$ be a ring and $I$ a commutatively closed ideal of $R$. Also, let $diam(\dfrac{R}{I})$ is finite. Then the diameter $diam(R)$ is not necessarily finite. Consider $R= k[x_{1}, x_{2}, \dots, x_{m}, \dots][x; \sigma]$, where $k$ is field and $\sigma$ is an automorphism on $k[x_{1}, x_{2}, \dots, x_{m}, \dots]$ such that $\sigma(x_{i})=x_{i+1}$.
 Since the chain $x_{1}x\sim_{1} xx_{1}=x_{2}x\sim_{1} xx_{2}=x_{3}x\sim_{1} \dots $ is infinite we get that $diam(R)$ is infinite.
 Also assume that $I=(x)$. We can easily see that $I$ is commutatively closed. Since $\dfrac{R}{I}\cong k[x_{1}, x_{2}, \dots, x_{m}, \dots]$ is commutative, we have $diam(k[x_{1}, x_{2}, \dots, x_{m}, \dots])=0$.
}
\end{example} 
\begin{example} 
{\rm
Let $R$ and $S$ be two rings and $\varphi :R \rightarrow S$ is a morphism.  In general we can get any relation between $diam(R)$ and $diam(S)$.  For instance, there is homomorphism $\varphi : k\left\langle x, y\right\rangle  \rightarrow k$, where $k$ is a field. We know $diam(k)=1$ while $diam(k\left\langle x, y\right\rangle )$ is infinite.
}
\end{example}
The next proposition establishes a nice connection of our study with the notion of 
stably associated elements in a ring.  In fact, we will just use a very special case of this notion and hence we do not introduce a formal definition (see PM Cohn \cite{C} for more information about that).  Let us recall that two square matrices $A,B\in M_n(R)$ are said to be {\it associated} if there exist invertible matrices $P,Q$ such that 
$PAQ=B$.
We shall say that $A$ and $B$ are {\it stably associated} if $diag(A,I)$ is associated to $diag(B,J)$ for some unit matrices $I,J$ (not necesserily of same size).
\begin{proposition}
	\label{stably associated diag matrices}
Let $x,y$ be elements in a ring $R$.  The $2\times2$ diagonal matrices $diag(1+xy,1)$ and $diag(1+yx,1)$ are associated.
\end{proposition}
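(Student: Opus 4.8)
The plan is to exhibit two explicit invertible matrices $P,Q\in M_2(R)$ and to check the single identity $P\,diag(1+xy,1)\,Q=diag(1+yx,1)$ by direct multiplication. I would first pass to the slightly more symmetric setting of $1-ab$ and $1-ba$: setting $a=x$ and $b=-y$ gives $1-ab=1+xy$ and $1-ba=1+yx$, so it suffices to show that $diag(1-ab,1)$ and $diag(1-ba,1)$ are associated for arbitrary $a,b\in R$. The essential warning is that one must \emph{not} try to reduce $diag(1-ab,1)$ by clearing entries against its $(1,1)$-corner; that step would implicitly require $1-ab$ to be a unit, which is not assumed. The whole argument instead rests on an identity of elementary matrices valid over any ring.

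The central step is the elementary-matrix identity
\[
\begin{pmatrix}1 & -a\\ 0 & 1\end{pmatrix}\begin{pmatrix}1-ab & 0\\ 0 & 1\end{pmatrix}\begin{pmatrix}1 & 0\\ -b & 1\end{pmatrix}\begin{pmatrix}1 & a\\ 0 & 1\end{pmatrix}=\begin{pmatrix}1 & 0\\ -b & 1\end{pmatrix}\begin{pmatrix}1 & 0\\ 0 & 1-ba\end{pmatrix},
\]
which I would verify by multiplying out both sides. On the left, the two inner factors combine to $\begin{pmatrix}1-ab & 0\\ -b & 1\end{pmatrix}$, and sandwiching this between $\begin{pmatrix}1 & \mp a\\ 0 & 1\end{pmatrix}$ collapses the product through the intermediate matrix $\begin{pmatrix}1 & -a\\ -b & 1\end{pmatrix}$, the terms $\pm ab$ and $\pm ba$ cancelling exactly. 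Left-multiplying the identity by $\begin{pmatrix}1 & 0\\ b & 1\end{pmatrix}$ (the inverse of the remaining lower-triangular factor) then yields
\[
P_0\,diag(1-ab,1)\,Q_0=diag(1,1-ba),\qquad P_0=\begin{pmatrix}1 & 0\\ b & 1\end{pmatrix}\begin{pmatrix}1 & -a\\ 0 & 1\end{pmatrix},\quad Q_0=\begin{pmatrix}1 & 0\\ -b & 1\end{pmatrix}\begin{pmatrix}1 & a\\ 0 & 1\end{pmatrix},
\]
where $P_0,Q_0$ are invertible as products of elementary matrices.

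Finally I would specialise to $a=x$, $b=-y$, obtaining
\[
P_0=\begin{pmatrix}1 & -x\\ -y & 1+yx\end{pmatrix},\qquad Q_0=\begin{pmatrix}1 & x\\ y & 1+yx\end{pmatrix},
\]
so that $P_0\,diag(1+xy,1)\,Q_0=diag(1,1+yx)$, and then swap the two diagonal entries with the permutation matrix $S=\begin{pmatrix}0 & 1\\ 1 & 0\end{pmatrix}=S^{-1}$, since $S\,diag(1,1+yx)\,S=diag(1+yx,1)$. Taking $P=SP_0$ and $Q=Q_0S$ gives invertible matrices with $P\,diag(1+xy,1)\,Q=diag(1+yx,1)$, which is exactly the assertion that the two diagonal matrices are associated. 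The only genuine obstacle is guessing the correct four elementary factors; once they are in hand every step is a mechanical $2\times2$ multiplication, and no invertibility or finiteness hypothesis on $R$ is ever used.
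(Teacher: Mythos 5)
Your proof is correct, and every computation in it checks out: the four-transvection identity does hold (both sides equal $\begin{pmatrix}1 & 0\\ -b & 1-ba\end{pmatrix}$), the specialisation $a=x$, $b=-y$ gives exactly the matrices $P_0,Q_0$ you display, and composing with the permutation matrix $S$ yields invertible $P=SP_0$, $Q=Q_0S$ with $P\,\mathrm{diag}(1+xy,1)\,Q=\mathrm{diag}(1+yx,1)$. The route is genuinely different in organisation from the paper's. The paper exhibits one bespoke pair of invertible matrices,
$$
\begin{pmatrix} -y & -yx-1 \\ 1 & x \end{pmatrix}
\quad\text{and}\quad
\begin{pmatrix} x & 1 \\ -yx-1 & -y \end{pmatrix},
$$
proves their invertibility by factoring each as an elementary transvection times a matrix of the form $\begin{pmatrix} -y & -1 \\ 1 & 0 \end{pmatrix}$ or $\begin{pmatrix} x & 1 \\ -1 & 0 \end{pmatrix}$, and then verifies in a single three-factor product that sandwiching $\mathrm{diag}(1+xy,1)$ between them gives $\mathrm{diag}(1+yx,1)$ directly, with no permutation step. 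Your argument is instead the classical Whitehead-lemma computation: elementary row and column operations carry $\mathrm{diag}(1-ab,1)$ to $\mathrm{diag}(1,1-ba)$, and a swap of diagonal entries finishes. What your version buys is a more systematic and slightly more informative conclusion --- the associating matrices are exhibited explicitly as products of transvections and a permutation, so the proposition appears as an instance of a standard identity rather than an ad hoc verification; what the paper's version buys is brevity, landing on $\mathrm{diag}(1+yx,1)$ in one multiplication. Neither argument needs any hypothesis on $R$ beyond its being a unital ring, and your remark that one must avoid any reduction presupposing $1-ab$ invertible is exactly the right caution.
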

\begin{proof}
First remark that 
$$
\begin{pmatrix}
-y & -1 \\ 
1 & 0
\end{pmatrix} \begin{pmatrix}
1 & x \\ 
0 & 1
\end{pmatrix} =\begin{pmatrix}
-y & -yx-1 \\ 
1 & x
\end{pmatrix} ;
\begin{pmatrix}
1	& 0 \\ 
-y & 1
\end{pmatrix} \begin{pmatrix}
x & 1 \\ 
-1 & 0
\end{pmatrix} =\begin{pmatrix}
x & 1 \\ 
-yx-1 & -y
\end{pmatrix},
$$
so that the matrices on the right-hand side of the above equalities are invertible.	
We have:
$$
\begin{pmatrix}
-y  & -yx-1 \\ 
1 & x
\end{pmatrix}\begin{pmatrix}
1+xy & 0 \\ 
0 & 1
\end{pmatrix}  \begin{pmatrix}
x & 1 \\ 
-yx-1 & -y
\end{pmatrix} =\begin{pmatrix}
1+yx & 0 \\ 
0 & 1
\end{pmatrix}.
$$ 	
This proves our proposition.
\end{proof}
\begin{remark}
{\rm 
	Let us remark that the above result is due to the fact that, in the language used in 
	\cite{C}, the equality $(1+xy)x=x(1+yx)$ is comaximal.
}
\end{remark}

This leads to the following statement:
\begin{proposition}
\label{CC and stably associated}
	Let $S$ be a connected subset of a ring $R$.   Then $S$ is commutatively closed if and only if for any two elements $a,b\in S$  we have that the diagonal  matrices $diag(1-a,1)$ and $diag(1-b,1)$ are associated in $M_2(R)$.
\end{proposition}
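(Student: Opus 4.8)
The plan is to read Proposition~\ref{stably associated diag matrices} as a dictionary between the elementary relation $\sim_1$ and the relation ``being associated in $M_2(R)$'', and then to let connectedness propagate this dictionary across all of $S$. Throughout I will use that ``associated'' is an equivalence relation on $M_2(R)$: it is reflexive, and it is symmetric and transitive because inverses and products of invertible matrices are again invertible.

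For the direction ($\Rightarrow$), I would assume $S$ is commutatively closed and fix two elements $a,b\in S$. Since $S$ is connected, there is a path $a=c_0,c_1,\dots,c_m=b$ with $c_i\sim_1 c_{i+1}$ for every $i$. Writing $c_i=p_iq_i$ and $c_{i+1}=q_ip_i$, I would apply Proposition~\ref{stably associated diag matrices} with $x=p_i$ and $y=-q_i$, so that $1+xy=1-c_i$ and $1+yx=1-c_{i+1}$; this gives that $diag(1-c_i,1)$ is associated to $diag(1-c_{i+1},1)$. Composing the conjugating pairs of invertible matrices along the path, transitivity then yields that $diag(1-a,1)$ and $diag(1-b,1)$ are associated.

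For the direction ($\Leftarrow$), I would try to verify commutative closedness directly. Given $a\in S$ together with a factorization $a=pq$, put $b=qp$, so that $a\sim_1 b$ and, by Proposition~\ref{stably associated diag matrices} once more, $diag(1-a,1)$ is associated to $diag(1-b,1)$. The point is to deduce from this that $b\in S$, and here I would exploit connectedness of $S$ to compare $b$ with the vertices already $\sim$-linked to $a$ inside $S$, using the hypothesis on the pair $(a,b)$ to force the swapped product into the same closed class.

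The main obstacle is exactly this last deduction: the matrix condition is a priori weaker than the relation $\sim$, since associating the stabilised matrices $diag(1-a,1)$ and $diag(1-b,1)$ need not, by itself, reproduce a factorization of $a$ whose swap is $b$ (over a commutative base, for instance, association of these diagonal matrices only detects that $1-a$ and $1-b$ generate the same ideal). I expect the resolution to hinge on the comaximal-relation viewpoint recorded in the remark after Proposition~\ref{stably associated diag matrices}: the equality $(1+xy)x=x(1+yx)$ is comaximal, and it is this comaximality, in the sense of \cite{C}, that should let one descend from the associated $2\times 2$ diagonal matrices back to the level of elements of $R$, placing $qp$ inside $S$ and closing the argument. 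Pinning down precisely how connectedness is used to make the converse go through is the step I would scrutinise most carefully.
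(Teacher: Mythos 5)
Your forward direction is correct and is precisely the paper's argument: connectedness provides a chain $a=c_0\sim_1 c_1\sim_1\cdots\sim_1 c_m=b$, each edge $c_i=p_iq_i$, $c_{i+1}=q_ip_i$ is converted into an association of $diag(1-c_i,1)$ with $diag(1-c_{i+1},1)$ by Proposition~\ref{stably associated diag matrices} (your substitution $x=p_i$, $y=-q_i$ makes the sign change explicit, a point the paper passes over in silence), and transitivity of association concludes.

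The converse is where you stop, and the obstacle you name is genuine; in fact it cannot be overcome, not even via the comaximality remark you invoke. The paper's own converse is exactly the one-liner you began: for $a=xy\in S$, Proposition~\ref{stably associated diag matrices} gives that $diag(1-xy,1)$ and $diag(1-yx,1)$ are associated, ``hence $yx\in S$''. That last inference is legitimate only if one reads the hypothesis as a membership criterion, namely that any $b\in R$ for which $diag(1-b,1)$ is associated to $diag(1-a,1)$ with $a\in S$ must already belong to $S$. Under the hypothesis as literally stated --- a condition on pairs of elements both of which lie in $S$ --- the converse is simply false: take $S=\{0\}$ in $M_2(k)$, $k$ a field. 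This $S$ is trivially connected, the stated condition holds because $diag(1,1)$ is associated to itself, and yet $E_{12}E_{11}=0\in S$ while $E_{11}E_{12}=E_{12}\notin S$, so $S$ is not commutatively closed. Hence no argument, comaximal or otherwise, can descend from the stated matrix condition to commutative closedness; what the paper actually uses is the stronger ``membership'' reading, under which the converse is immediate and needs no comaximality at all. (Under that stronger reading it is the forward direction that breaks: in a commutative ring every singleton $\{a\}$ is connected and commutatively closed, yet any $b$ with $1-b=u(1-a)$ for a unit $u\neq 1$ satisfies the matrix condition without equalling $a$.) So the defect lies in the statement of the proposition rather than in your argument, and your decision to scrutinise exactly this step was the right one.
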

\begin{proof}
		if $S$ is commutatively closed and connected (equivalently $S=C(a)$ for some $a\in S$) subset of $R$ and $a,b\in S$. then there is a path from $a$ to $b$ in $S$ and, as in Proposition \ref{First basic prpoerties of connected elements}, we have two sequences $x_1,x_2,\dots,x_n$ and $y_1,y_2,\dots, y_n$ such that $a=x_1y_1, y_1x_1=x_2y_2, y_2x_2=x_3y_3,
		\dots , y_nx_n=b$ and Proposition \ref{stably associated diag matrices} easily implies that $diag(1-a,1)$ and $diag(1-b,1)$ are associated.
		
	   Conversely, if $a=xy\in S$, then Proposition \ref{stably associated diag matrices} gives that the diagonal matrices $diag(1-xy,1)$ and  $diag(1-yx,1)$ are associated and hence $yx\in S$. 
\end{proof}

In the next corollary, we get some generalizations of classical examples.

\begin{corollary}
	The set $1-S$ is commutatively closed for any one of the following subsets $S$ of $R$:
	\begin{enumerate}
		\item $S=U_r(R)$ (resp. $S=U_l(R)$ or $S=U(R)$), the set of right (rep. left, two sided) invertible elements of $R$.
		\item  $S=\{ A\in M_n(K)\mid Rank(A)=l\}$, where $K$ is a field and $l\le n\in \mathbb N$.
		\item $S= reg(R)$ the set of regular elements of $R$.
	\item $S$ is the set of strongly $\pi$-regular elements.
		\item $S$ is the set of left (right) zero divisors in $R$.
	\end{enumerate}
\end{corollary}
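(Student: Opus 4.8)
The plan is to run every case through the single computational engine already supplied by Proposition \ref{stably associated diag matrices}. Writing $T=1-S$, saying that $T$ is commutatively closed means exactly that $\alpha\beta\in T$ forces $\beta\alpha\in T$; equivalently, $1-\alpha\beta\in S$ must imply $1-\beta\alpha\in S$. Applying Proposition \ref{stably associated diag matrices} with $x=\alpha$ and $y=-\beta$ (so that $1+xy=1-\alpha\beta$ and $1+yx=1-\beta\alpha$) shows that $diag(1-\alpha\beta,1)$ and $diag(1-\beta\alpha,1)$ are associated in $M_2(R)$. Hence it suffices to establish, for each $S$ on the list, the transfer principle: if $s\in S$ and $diag(s,1)$ is associated to $diag(s',1)$, then $s'\in S$. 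Taking $s=1-\alpha\beta$ and $s'=1-\beta\alpha$ then closes the argument.

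Two observations make this transfer routine for cases (1), (2), (3) and (5). First, membership in $S$ can be detected on the $(1,1)$-entry of $diag(s,1)$: the element $s$ is right invertible iff $diag(s,1)M=I_2$ for some $M$ (the $(1,1)$-entry gives $sm_{11}=1$), $s$ is von Neumann regular iff $diag(s,1)$ is (the $(1,1)$-entry of $diag(s,1)\,Z\,diag(s,1)=diag(s,1)$ reads $sz_{11}s=s$), and $s$ is a left zero divisor iff $diag(s,1)$ is (annihilate on the right by $\begin{pmatrix} v & 0\\ 0 & 0\end{pmatrix}$, and conversely read off the first column). Second, each of these matrix-level properties — one- or two-sided invertibility, regularity, being a one-sided zero divisor — is visibly invariant under an association $A'=PAQ$ with $P,Q$ units, by conjugating the witnessing equation (e.g. $AB=I$ gives $A'(Q^{-1}BP^{-1})=I$, and $A=AZA$ gives $A'=A'(Q^{-1}ZP^{-1})A'$). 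For case (2), where $R=M_n(K)$, the relevant association-invariant is instead the rank: over a field associated matrices share the same rank, and $\mathrm{Rank}\,diag(M,I_n)=\mathrm{Rank}\,M+n$, whence $\mathrm{Rank}(1-\alpha\beta)=\mathrm{Rank}(1-\beta\alpha)$ and the rank-$l$ locus is preserved. Combining the two observations yields $s'\in S$ in these four cases.

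The main obstacle is case (4), the strongly $\pi$-regular elements. The $(1,1)$-entry trick still gives the equivalence "$diag(s,1)$ strongly $\pi$-regular $\iff$ $s$ strongly $\pi$-regular" (from $s^{n+1}m_{11}=s^n$ and $m'_{11}s^{n+1}=s^n$), but strong $\pi$-regularity is \emph{not} transparently preserved under an arbitrary association $A'=PAQ$: the powers satisfy $A'^{k}=P(AQP)^{k-1}AQ$, which are not governed by the powers of $A$, so the clean transfer argument above simply breaks down here. What one actually needs is the Jacobson-type statement that $1-\alpha\beta$ is strongly $\pi$-regular if and only if $1-\beta\alpha$ is — precisely the assertion recorded for this class in Example (5) and proved in \cite{LN} (equivalently, this is Jacobson's lemma for Drazin invertibility, since an element is strongly $\pi$-regular exactly when it is Drazin invertible). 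I would therefore treat (1), (2), (3) and (5) uniformly through the association-transfer principle powered by Proposition \ref{stably associated diag matrices}, and dispatch (4) by invoking the result of \cite{LN}.
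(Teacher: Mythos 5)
Your proposal is correct, but it is genuinely a different route from what the paper does: the paper's entire ``proof'' of this corollary is a one-line deferral --- ``We refer the reader to \cite{AL} for the proofs or references for these statements'' --- so no argument is actually written out there. What you do instead is make the corollary essentially self-contained inside the paper: you rewrite ``$1-S$ is commutatively closed'' as the implication $1-\alpha\beta\in S \Rightarrow 1-\beta\alpha\in S$, feed $x=\alpha$, $y=-\beta$ into Proposition \ref{stably associated diag matrices} to get that $diag(1-\alpha\beta,1)$ and $diag(1-\beta\alpha,1)$ are associated, and then verify case by case that membership in $S$ is equivalent to an association-invariant property of $diag(s,1)$: one- or two-sided invertibility, von Neumann regularity, being a one-sided zero divisor, and (for $R=M_n(K)$) the rank computed in $M_{2n}(K)$. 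All of these verifications check out, and this is visibly the argument that the placement of the corollary after Propositions \ref{stably associated diag matrices} and \ref{CC and stably associated} is meant to suggest, even though the authors never write it down. Your handling of case (4) is also the honest one: strong $\pi$-regularity is not transparently preserved by $A\mapsto PAQ$ (indeed $(PAQ)^k = P(AQP)^{k-1}AQ$ is not controlled by powers of $A$), so the uniform transfer argument really does break there, and invoking Jacobson's lemma for Drazin inverses from \cite{LN} --- the very reference the paper itself cites for this class in its opening Examples --- is the appropriate repair. In sum, the paper buys brevity by outsourcing all five statements to \cite{AL}; your version buys a proof readable within the paper, reusing its stable-association machinery, at the cost of a single external citation for the strongly $\pi$-regular case.
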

\begin{proof}
	We refer the reader to \cite{AL} for the proofs or references for these statements.
\end{proof}
\section{On Commutatively closed graph over matrix rings}
In this section, we study some properties of the commutatively closed graph over matrix rings.

\par For a ring $R$ and $n\in \mathbb{N}$, we denote $N_n(R)$ the set of elements of $R$ that are nilpotent of index $n$. 

\begin{proposition}
	\label{0_i is contained in N_i+1}
	Let $R$ be a ring.  Then
	\begin{enumerate}
		\item For any $i\in \mathbb{N}$, we have 
		$\{0\}_i \subseteq N_{i+1}(R)$.  In particular, 
		$\overline{0}\subseteq  N(R)$.
		\item For any strictly upper triangular matrix $U\in M_n(R)$,
		$U\in \{0\}_{n-1} \subseteq \overline{\{0\}}$.
		\item Let $U_n(R)\subseteq M_n(R)$ be the set of all $n\times n$ strictly upper triangular matrix over $R$. Then $diam\big(U_n(R)\big)\le 2(n-1)$.
	\end{enumerate}
\end{proposition}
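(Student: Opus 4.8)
The plan is to take the three parts in sequence, with the middle one carrying essentially all of the weight.

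For (1) I would simply feed $b=0$ into Proposition~\ref{First basic prpoerties of connected elements}(3). If $a\in\{0\}_i$, i.e. $a\sim_i 0$, that statement produces $x,y\in R$ with $a^{i+l}=x\,0^{l}\,y$ for every $l\in\NN$; taking $l=1$ gives $a^{i+1}=x\cdot 0\cdot y=0$, so $a\in N_{i+1}(R)$. Passing to the union then yields $\overline{\{0\}}=\bigcup_i\{0\}_i\subseteq\bigcup_i N_{i+1}(R)\subseteq N(R)$. This is a one-line consequence of the earlier proposition.

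The heart of the statement is (2), and here I would build an explicit path of length $n-1$ from $U$ to $0$ by deleting one row-and-column of support at each $\sim_1$-step. Writing $e_{ij}$ for the matrix units and $P_m=e_{11}+\cdots+e_{mm}$, observe that a strictly upper triangular $U$ has vanishing $n$-th row, so $P_{n-1}U=U$ and hence $U=P_{n-1}U\sim_1 UP_{n-1}$; right multiplication by $P_{n-1}$ erases the $n$-th column, leaving a strictly upper triangular matrix supported on the top-left $(n-1)\times(n-1)$ block. Iterating, at the $k$-th stage the current matrix is strictly upper triangular and supported on the top-left $(n-k+1)$-block, so its lowest occupied row is already zero; left multiplication by $P_{n-k}$ therefore fixes it, and the $\sim_1$-move then erases one further column. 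After $n-1$ moves the support has contracted to the $1\times1$ corner, which is forced to be $0$, giving $U\sim_1\cdots\sim_1 0$ with $n-1$ links, that is $U\in\{0\}_{n-1}\subseteq\overline{\{0\}}$. This is exactly the device already used in the excerpt for the single matrix $e_{12}+\cdots+e_{n-1,n}$. I expect this construction to be the main obstacle: the delicate point is the bookkeeping that the row killed at each stage is genuinely zero, which is precisely what ``strictly upper triangular, with support shrinking from the bottom-right'' guarantees, and the verification that each factorization $\cdot=P_{n-k}(\cdot)\sim_1(\cdot)P_{n-k}$ is legitimate.

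Finally (3) follows formally from (2). Since $0\in U_n(R)$ and, by (2), every strictly upper triangular $A$ satisfies $d(A,0)\le n-1$, the triangle inequality gives $d(A,B)\le d(A,0)+d(0,B)\le 2(n-1)$ for any $A,B\in U_n(R)$. Moreover each such $A$ lies in $\overline{\{0\}}$, so $C(A)=\overline{\{0\}}=C(0)$ for every $A\in U_n(R)$, whence $diam\big(U_n(R)\big)=diam\big(C(0)\big)$. The one point needing care is that the supremum defining $diam\big(C(0)\big)$ ranges over all of $\overline{\{0\}}$, so to match the definition literally one must know that the ascending chain $\{0\}_i$ has already stabilised at $i=n-1$ (equivalently, that nothing in $\overline{\{0\}}$ lies farther than $n-1$ from $0$); granting this, Proposition~\ref{3} closes the estimate at $2(n-1)$. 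Over a field this stabilisation is automatic, since by Proposition~\ref{0.3} the class $\overline{\{0\}}$ is exactly the set of nilpotent matrices, every such matrix is conjugate to a strictly upper triangular one, and Lemma~\ref{class of a conjugate} together with (2) then places it in $\{0\}_{n-1}$.
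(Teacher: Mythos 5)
Your proposal is correct and takes essentially the same approach as the paper: part (1) follows from Proposition \ref{First basic prpoerties of connected elements}(3) applied with $b=0$; part (2) uses the identical key move $U=PU\sim_1 UP$ with $P$ a diagonal projection idempotent, which deletes the last nonzero row (the paper phrases this as an induction on the index of that row, you as an explicit iteration shrinking the support block); and part (3) is the same triangle-inequality-through-$0$ argument. Your closing remark that the literal definition of $diam\big(U_n(R)\big)$ would require stabilisation of the chain $\{0\}_i$ is a fair observation, but the paper's own proof of (3) adopts exactly the pairwise-distance reading you use, so this does not separate the two arguments.
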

\begin{proof}
(1)This is easily proved by induction using Proposition \ref{First basic prpoerties of connected elements}.

(2) We may assume 
that $U\ne 0$ and we denote the lines of $U$ by $L_1,L_2,\dots, L_n$.  In fact, 
the last line $L_n$ is zero, and we  define $r\in \{1,\dots,n-1\}$ to be minimal 
such that $L_i$ is zero for $i>r$.   We will prove that $U\in \{0\}_r$ 
by induction on $r$.     
We write 
$$
U=\begin{pmatrix}
I_{r,r} & 0 \\ 
0 & 0
\end{pmatrix}U \quad {\rm and }\quad
B:=U\begin{pmatrix}
I_{r,r} & 0 \\ 
0 & 0
\end{pmatrix}\in \{U\}_1, 
$$
where $I_{r,r}$ denotes the identity matrix of size $r\times r$.

\noindent If $r=1$, we get that $B=0\in M_n(D)$ and this yields the thesis.

\noindent If $r>1$, write $B=(R_1,\dots, R_n)$ where $R_i$ is the $i^{th}$ row of $B$.   The matrix $B$ is easily seen to be upper triangular and such that 
the rows $R_r,\dots, R_n$ are zero.   This means that this matrix has at least one more zero row than the matrix $U$.  The induction hypothesis gives that 
$B\in \{0\}_{r-1}$, but then $U\in \{B\}_1\subseteq \{0\}_r\subseteq \overline{\{0\}} $, as required.

\noindent (3) By the above statement (2), we know $U_n(R)\subseteq \{0\}_{n-1} \subseteq \overline{\{0\}}$.  So that for two matrices $A,B\in U_n(R)$, we have $A\sim_{n-1} 0 \sim_{n-1} B$.  This yields the conclusion.  
\end{proof}


\smallskip

We will now determine the diameter of the class $C(0)\in M_n(D)$ where 
$D$ is a division ring.  
The following lemma is well known, we give a proof for completeness.
\begin{lemma}
	\label{Nilpotent matrices division ring and upper triangular}
	Every nilpotent matrix with coefficients in a division ring is similar to a strictly upper triangular matrix.
\end{lemma}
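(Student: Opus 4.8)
The plan is to imitate the classical proof over a field, taking care of the one-sided module structure. I would view the column space $D^{n}$ as a \emph{right} $D$-vector space, so that left multiplication by a matrix $N\in M_{n}(D)$ is a right-$D$-linear endomorphism of $D^{n}$. With this convention two matrices are similar precisely when they represent one and the same endomorphism in two bases, the change of basis being realized by conjugation by the corresponding invertible matrix. The point to keep in mind is that vector spaces over a division ring still admit a well-defined dimension and that every linearly independent family extends to a basis, so the usual flag constructions remain available.

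First I would record the elementary consequences of nilpotency. Writing $N^{k}=0$, the kernel filtration $0=\ker N^{0}\subseteq \ker N^{1}\subseteq\cdots\subseteq \ker N^{k}=D^{n}$ exhausts $D^{n}$, and one has $N(\ker N^{i})\subseteq \ker N^{i-1}$ for every $i$, since $N^{i-1}(Nx)=N^{i}x=0$ whenever $x\in\ker N^{i}$.

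Next I would refine this chain to a complete flag: choose a basis of $\ker N$, extend it to a basis of $\ker N^{2}$, and continue up to $\ker N^{k}=D^{n}$. This yields an ordered basis $e_{1},\dots,e_{n}$ of $D^{n}$ in which each $\ker N^{i}$ is spanned by an initial segment $e_{1},\dots,e_{d_{i}}$, where $d_{i}=\dim_{D}\ker N^{i}$. If $e_{j}$ lies in the block extending $\ker N^{i-1}$ to $\ker N^{i}$, i.e.\ $d_{i-1}<j\le d_{i}$, then $Ne_{j}\in\ker N^{i-1}=\operatorname{span}_{D}(e_{1},\dots,e_{d_{i-1}})$, and every such basis vector carries an index $\le d_{i-1}<j$, hence occurs strictly before $e_{j}$. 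Therefore the matrix of $N$ in the basis $e_{1},\dots,e_{n}$ is strictly upper triangular, and this matrix equals $P^{-1}NP$, where $P$ is the change-of-basis matrix; this is exactly the asserted similarity. (An equivalent route is an induction on $n$ using that $\operatorname{Im}N$ is a proper $N$-invariant subspace on which $N$ is again nilpotent, then extending an adapted basis of $\operatorname{Im}N$ to all of $D^{n}$.)

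The only genuinely delicate point, and the step I would watch most closely, is the bookkeeping imposed by noncommutativity: one must fix the right-module convention throughout so that left multiplication really is linear and so that a change of basis is implemented by conjugation, and one must invoke the division-ring forms of ``well-defined dimension'' and ``basis extension'' in place of their field analogues. Once these conventions are pinned down, no further obstacle arises and the argument runs exactly as in the commutative case.
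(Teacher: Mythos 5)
Your proof is correct, but it takes a genuinely different route from the paper's. The paper argues by induction on the matrix size: since $A$ is nilpotent, it has a nonzero kernel column $u$; one completes $u$ to an invertible matrix $U$, so that $U^{-1}AU$ has zero first column, observes that the lower-right $(n-1)\times(n-1)$ block is again nilpotent, and invokes the induction hypothesis. You instead build, in one pass, a basis of $D^n$ adapted to the kernel filtration $0\subseteq \ker N\subseteq \ker N^2\subseteq\cdots\subseteq \ker N^k=D^n$, using $N(\ker N^i)\subseteq \ker N^{i-1}$, and read off strict upper triangularity of the matrix in that basis. Both arguments rest on the same underlying facts (nilpotent maps have nonzero kernels, and independent families over a division ring extend to bases -- equivalently, a nonzero column completes to an invertible matrix), but the trade-offs differ: the paper's induction is more economical, needing no discussion of dimension, flags, or module conventions, since it stays at the level of explicit matrix identities; your construction front-loads the right-$D$-vector-space formalism but delivers more structure, namely a basis adapted to the entire kernel flag, which is conceptually the first step toward the Jordan normal form that the paper later imports from the reference [HB]. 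Your insistence on the right-module convention (so that left multiplication by $N$ is right-$D$-linear and change of basis is implemented by conjugation $P^{-1}NP$) is precisely the bookkeeping needed to make the field argument survive noncommutativity, and there is no gap in it.
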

\begin{proof}
	The proof is based on the fact that any nonzero column
	can be the first column of an invertible matrix.  
	So let $A\in M_n(D)$ be a nilpotent matrix with coefficients in a division ring $D$ and let $u\in M_{n,1}(D)$ be a nonzero column such that $Au=0$.
	Let $U\in M_n(D)$ be an invertible matrix having $u$ as its first column.  We conclude that 
	$$U^{-1}AU=\begin{pmatrix}
	0 & r \\ 
	0 & A_1
	\end{pmatrix},   
	$$ 
	for some row $r\in M_{1,n-1}(D)$.  It is easy to check that $A_1$ is again nilpotent. An easy induction on the size of the nilpotent matrix yields the proof.
	\end{proof}


The next result was proved in \cite{AL} for matrices with coefficients over fields.  
\begin{proposition}
	\label{The class of Zero in matrix rings}
	Let $D$ be a division ring and $n\in \mathbb N
	$, the class of $\overline{\{0\}}$ in $R=M_n(D)$ is 
	the set of nilpotent matrices. 
\end{proposition}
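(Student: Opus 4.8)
The plan is to prove the two inclusions separately. By Proposition \ref{0_i is contained in N_i+1}(1) we already know $\overline{\{0\}}\subseteq N(R)$, so the set of all elements connected to $0$ consists of nilpotent matrices; this gives the easy inclusion and holds over any ring. The substance of the proposition is the reverse inclusion: every nilpotent matrix $A\in M_n(D)$ lies in $\overline{\{0\}}$, i.e. $A\sim 0$.

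\medskip

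The key reduction is to combine the two lemmas immediately preceding the statement. First I would invoke Lemma \ref{Nilpotent matrices division ring and upper triangular}: a nilpotent $A\in M_n(D)$ is similar to a strictly upper triangular matrix $U$, so there is an invertible $P\in M_n(D)$ with $U=PAP^{-1}$. Next, by Proposition \ref{0_i is contained in N_i+1}(2), every strictly upper triangular matrix satisfies $U\in\{0\}_{n-1}\subseteq\overline{\{0\}}$, hence $U\sim 0$. To transport this back to $A$, I would use Lemma \ref{class of a conjugate}, which asserts that conjugate elements have identical connection classes: since $A$ and $U$ are conjugate by the unit $P$, we have $\{A\}_m=\{U\}_m$ for all $m$, and in particular $0\in\{U\}_{n-1}=\{A\}_{n-1}$, so $A\sim 0$ and $A\in\overline{\{0\}}$. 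This establishes $N(R)\subseteq\overline{\{0\}}$ and completes the equality.

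\medskip

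The only real obstacle is ensuring the division-ring hypothesis is genuinely used where it is needed, and it enters precisely through Lemma \ref{Nilpotent matrices division ring and upper triangular} — the triangularization of nilpotent matrices relies on the fact that over a division ring any nonzero column extends to an invertible matrix, which fails over general rings. Everything else (the upper-triangular computation and the conjugation-invariance of classes) holds over an arbitrary ring. I would therefore emphasize that the passage from "nilpotent" to "strictly upper triangular up to similarity" is the single step that restricts us to division rings, and note that this is exactly the improvement over the field case proved in \cite{AL} cited just before the statement. No lengthy computation is required: the proof is a clean three-lemma assembly, with the combinatorial work already discharged in Proposition \ref{0_i is contained in N_i+1}(2).
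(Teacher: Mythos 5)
Your proposal is correct and is essentially identical to the paper's own proof: both obtain the inclusion $\overline{\{0\}}\subseteq N(R)$ from Proposition \ref{0_i is contained in N_i+1}(1), then triangularize a nilpotent matrix via Lemma \ref{Nilpotent matrices division ring and upper triangular}, apply Proposition \ref{0_i is contained in N_i+1}(2) to the strictly upper triangular form, and transfer the conclusion back to $A$ by the conjugation-invariance of classes (Lemma \ref{class of a conjugate}). Your closing remark correctly identifies the triangularization step as the only place where the division-ring hypothesis enters, which matches how the paper extends the field case of \cite{AL}.
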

\begin{proof}
	We have seen that, in any ring, 
	$\{0\}_i\subseteq N(R)_{i+1}$ (cf.\ Proposition \ref{0_i is contained in N_i+1}).  
	Conversely, if $A\in M_n(D)$ is nilpotent, the above lemma \ref{Nilpotent matrices division ring and upper triangular} shows that there exists an invertible matrix $P$  
	and a strictly upper triangular matrix $U\in M_n(D)$ such that $PAP^{-1}=U$.  
	Since the class of an element is  the same as the class of any of its 
	conjugate, we conclude that
	$\overline{\{A\}}=\overline{\{U\}}$.  Proposition \ref{0_i is contained in N_i+1} (1) and (2) implies that 
	$\overline{\{U\}}=\overline{\{0\}}$.   
%
%
\end{proof}

A Jordan block $J_l$ (associated to zero) is a matrix of the form
$$
J_l=\begin{pmatrix}
0 & 1 & 0 & 0 & \dots & 0 \\ 
0 & 0 & 1 & 0 & \dots & 0 \\ 
0 & 0 & 0 & 1 & \dots & 0 \\ 
\dots &  \dots &  \dots &  \dots &  \dots &   \dots \\ 
0 & 0 & 0 & \dots & \dots & 1 \\ 
0 & 0 & 0 & 0 & \vdots & 0
\end{pmatrix}\in M_l(D) \quad \quad (1)
$$

If $A\in M_n(D)$ is a nilpotent matrix, where $D$ is a division ring,  $A$ is similar to a diagonal sum of Jordan blocks.  This is classical if $D$ is commutative and for a proof in a noncommutative setting we may refer to Chapter $8$ of P.M. Cohn's book (\cite{C}) or to the more recent paper \cite{HB}.  Let us notice that $J_1=0$. 
\begin{lemma}
	\label{Class of a Jordan block}
	Let $J_l$ be a matrix block of size $l>1$. Then $J_l\sim_{l-1} 0$ 
\end{lemma}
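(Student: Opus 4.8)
The plan is to notice that $J_l$ requires no new work: by its very definition in display (1) it is a strictly upper triangular matrix lying in $M_l(D)$, so the statement is the special case $n=l$ of Proposition \ref{0_i is contained in N_i+1}(2). That proposition gives $J_l\in\{0\}_{l-1}$, and by the definition of $\{0\}_{l-1}$ this is precisely the assertion $J_l\sim_{l-1}0$. Thus the entire proof consists in checking the (obvious) hypothesis that $J_l$ is strictly upper triangular and invoking the earlier result.

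To make the counting transparent, and because the same reduction will be convenient later, I would also exhibit the explicit chain that underlies this. Let $P=\mathrm{diag}(I_{l-1},0)\in M_l(D)$ be the projection onto the first $l-1$ coordinates. Since the last row of $J_l$ vanishes we have $PJ_l=J_l$, so the factorization $J_l=P\cdot J_l$ together with the definition of $\sim_1$ gives
\[
J_l = P J_l \sim_1 J_l P .
\]
A one-line computation shows that $J_lP$ is obtained from $J_l$ by erasing its single nonzero entry in the last column, so that $J_lP=\mathrm{diag}(J_{l-1},0)$, a strictly upper triangular matrix with one more zero row than $J_l$. Iterating (equivalently, running the induction in the proof of Proposition \ref{0_i is contained in N_i+1}(2)) removes one superdiagonal entry at each step and reaches $0$ after exactly $l-1$ steps, recovering $J_l\sim_{l-1}0$.

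I do not expect any genuine obstacle: all the real content already resides in Proposition \ref{0_i is contained in N_i+1}, and what remains is purely a matter of recognizing $J_l$ as an instance. The only thing one might wish to add, although it is not needed for the stated inequality, is that $l-1$ is in fact the exact distance $d(0,J_l)$. The matching lower bound follows from Proposition \ref{First basic prpoerties of connected elements}(3): from $J_l\sim_m 0$ one obtains $J_l^{m+1}=X\cdot 0\cdot Y=0$ for suitable $X,Y\in M_l(D)$, so the nilpotency index $l$ of $J_l$ satisfies $l\le m+1$, forcing $m\ge l-1$.
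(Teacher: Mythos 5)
Your proposal is correct, and at bottom it is the paper's own argument: the paper proves the lemma by induction on $l$, writing $J_l=(\sum_{i=1}^{l-1}e_{ii})J_l\sim_1 J_l(\sum_{i=1}^{l-1}e_{ii})=diag(J_{l-1},0)$ and invoking the induction hypothesis, which is exactly your explicit chain with $P=diag(I_{l-1},0)$. The only real difference is your shortcut of citing Proposition \ref{0_i is contained in N_i+1}(2), of which the lemma is indeed a literal special case ($J_l$ is strictly upper triangular with exactly $l-1$ nonzero rows, so that proposition's proof even lands in $\{0\}_{l-1}$ on the nose); the paper instead redoes the induction, apparently so as to extract from the proof the specific one-step relation $J_l\sim_1 diag(J_{l-1},J_1)$ (Corollary \ref{J_L+1 sim_1 (J_l,J1)}), needed later in Theorem \ref{Diam of the class of nilpotent matrices} --- a relation your chain also delivers, since $J_lP=diag(J_{l-1},0)$ and $J_1=0$. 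Your closing remark that $l-1$ is the exact distance is also correct, and is precisely the later Proposition \ref{Nilpotent matrices} ($A^{l+1}=0\Leftrightarrow A\sim_l 0$) specialized to $J_l$, proved there by the same appeal to Proposition \ref{First basic prpoerties of connected elements}(3).
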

\begin{proof}
	As above, we write $J_l$ for the Jordan matrix presented in (1).
	
	We proceed by induction on $l$.  If $l=2$, we have 
	$$
	J_2=\begin{pmatrix}
	0 & 1 \\ 
	0 & 0
	\end{pmatrix} = \begin{pmatrix}
	1 & 0 \\ 
	0 & 0
	\end{pmatrix}\begin{pmatrix}
	0 & 1 \\ 
	0 & 0
	\end{pmatrix}
\sim_1 \begin{pmatrix}
0 & 1 \\ 
0 & 0
\end{pmatrix}\begin{pmatrix}
1 & 0 \\ 
0 & 0
\end{pmatrix}=\begin{pmatrix}
0 & 0 \\ 
0 & 0
\end{pmatrix}.$$

Similarly, for $l>2$, we have
$$
J_{l}=(\sum_{i=1}^{l-1} e_{ii})J_{l}\sim_1 J_{l}(\sum_{i=1}^{l-1} e_{ii})
=\begin{pmatrix}
J_{l-1} & 0 \\ 
0 & 0
\end{pmatrix}\sim_{l-2} 0. $$ 
Where we have used the induction hypothesis: $J_{l-1}\sim_{l-2} 0$.
This implies that $J_{l}\sim_{l-1} 0$, as required.
\end{proof}
Let us extract from the above proof the following observation.

\begin{corollary}
	\label{J_L+1 sim_1 (J_l,J1)}
	If $l>1$, we have $J_{l}\sim_1 diag(J_{l-1}, J_1)$.
\end{corollary}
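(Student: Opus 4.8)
The plan is to observe that this corollary merely isolates the first step of the inductive computation already carried out in Lemma \ref{Class of a Jordan block}, so almost no new work is required. I would introduce the diagonal idempotent $P := \sum_{i=1}^{l-1} e_{ii} \in M_l(D)$, whose only vanishing diagonal entry occupies the last position, and then verify the two matrix identities $P J_l = J_l$ and $J_l P = diag(J_{l-1}, J_1)$ directly.

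For the first identity, left multiplication by $P$ alters only the last row of $J_l$; since that row is already zero, we get $P J_l = J_l$. For the second, right multiplication by $P$ zeros out the last column of $J_l$, whose single nonzero entry is the superdiagonal $1$ in position $(l-1,l)$. Removing it leaves the block matrix $\begin{pmatrix} J_{l-1} & 0 \\ 0 & 0 \end{pmatrix}$, because the top-left $(l-1)\times(l-1)$ corner of $J_l$ is by construction $J_{l-1}$. Recalling that $J_1 = 0 \in M_1(D)$, this block matrix is exactly $diag(J_{l-1}, J_1)$.

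Finally, I would invoke the definition of $\sim_1$: taking $c := P$ and $d := J_l$, the identities above read $J_l = cd$ and $diag(J_{l-1}, J_1) = dc$, which is precisely what $J_l \sim_1 diag(J_{l-1}, J_1)$ means. The hypothesis $l > 1$ guarantees $P \ne 0$ and that the $(l-1)$-sized block is nonempty; the only point demanding any care is checking the right-multiplication identity $J_l P = diag(J_{l-1}, J_1)$, namely that deleting the last column genuinely produces the block-diagonal form and not a shifted matrix, but this is immediate from the explicit shape $(1)$ of $J_l$.
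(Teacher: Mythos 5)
Your proof is correct and is essentially the paper's own argument: the corollary is explicitly ``extracted'' from the first step of the proof of Lemma \ref{Class of a Jordan block}, where the same idempotent $P=\sum_{i=1}^{l-1}e_{ii}$ is used to write $J_l = PJ_l \sim_1 J_lP = diag(J_{l-1},J_1)$. Your verification of the two matrix identities simply makes explicit what the paper leaves to the reader.
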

We will now look more closely to the class of nilpotent matrices.
\begin{proposition}
	\label{Nilpotent matrices}
	Let $D$ be a division ring. Then for $A\in M_n(D)$, we have 
	$$
	A^{l+1}=0 \Leftrightarrow A\sim_l 0.
	$$
\end{proposition}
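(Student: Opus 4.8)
The plan is to prove the equivalence $A^{l+1}=0 \Leftrightarrow A \sim_l 0$ by combining the Jordan decomposition of nilpotent matrices over a division ring with the computations already established for single Jordan blocks. The backward implication is immediate from earlier results: by Proposition \ref{The class of Zero in matrix rings}, $A \sim_l 0$ forces $A$ to be nilpotent, and by Proposition \ref{First basic prpoerties of connected elements}(3) applied to $A \sim_l 0$, there exist $X,Y$ with $A^{l+k}=X\, 0^k\, Y = 0$ for all $k \geq 1$ (in particular $k=1$ gives $A^{l+1}=0$). So the substance of the proof is the forward direction.

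For the forward direction, assume $A^{l+1}=0$, so $A$ is nilpotent of index at most $l+1$. First I would invoke the fact (cited just before the statement, from Cohn or \cite{HB}) that $A$ is similar to a diagonal sum of Jordan blocks $J_{l_1} \oplus J_{l_2} \oplus \dots \oplus J_{l_s}$. Since $A^{l+1}=0$, every block size satisfies $l_i \leq l+1$. By Lemma \ref{class of a conjugate}, passing to a conjugate does not change the class $\{A\}_n$, so it suffices to show $\operatorname{diag}(J_{l_1},\dots,J_{l_s}) \sim_l 0$. The key tool is Lemma \ref{Class of a Jordan block}, which gives $J_{l_i} \sim_{l_i - 1} 0$, together with Lemma \ref{4} (the class of a product is the product of classes), which lets me handle the blocks simultaneously.

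The heart of the argument is to reduce each block to zero in a number of steps governed by the largest block. Since $l_i \leq l+1$, each block satisfies $J_{l_i} \sim_{l_i - 1} 0$ with $l_i - 1 \leq l$. The obstacle is that different blocks require different numbers of steps, so I must pad the shorter chains to length $l$ while staying within $M_n(D)$; this is exactly the padding trick used in the proofs of Lemma \ref{4} and Proposition \ref{diameter of a product}, where one inserts stationary steps $x \sim_1 x$ once a coordinate has already reached its target. Concretely, each $J_{l_i}$ reaches $0$ in $l_i-1 \leq l$ steps, and since $0 \sim_1 0$ trivially, I can extend any chain ending at $0$ to exactly $l$ steps. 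Running these padded chains in parallel across the block-diagonal structure (justified by Lemma \ref{4}) yields $\operatorname{diag}(J_{l_1},\dots,J_{l_s}) \sim_l 0$, and hence $A \sim_l 0$.

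The main subtlety I would watch carefully is the bookkeeping on the number of steps: I must confirm that when $A^{l+1}=0$ the maximal block size is genuinely at most $l+1$ (so that the corresponding chain length $l_i - 1$ is at most $l$), and that the padding preserves the block-diagonal form at each intermediate stage so the parallel chains are legitimate single-step moves in $M_n(D)$. A clean way to organize this is to prove directly, by induction on block count using Corollary \ref{J_L+1 sim_1 (J_l,J1)} to peel off one layer of the largest blocks at a time, that a block-diagonal nilpotent matrix whose largest block has size $m$ satisfies $A \sim_{m-1} 0$; applying this with $m \leq l+1$ completes the argument. I expect this inductive step --- coordinating the simultaneous reduction of blocks of unequal sizes --- to be the one place where care is needed, though it is conceptually routine given the lemmas already in hand.
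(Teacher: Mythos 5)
Your proposal is correct and takes essentially the same route as the paper's proof: the converse via Proposition \ref{First basic prpoerties of connected elements}(3), and the forward direction via the Jordan decomposition from \cite{HB}, conjugation-invariance of the class (Lemma \ref{class of a conjugate}), the single-block computation $J_m \sim_{m-1} 0$ (Lemma \ref{Class of a Jordan block}), and the fact that shorter chains can be padded and run in parallel across the blocks. The only difference is cosmetic: you spell out the padding and parallel-chain step (via the product-ring viewpoint of Lemma \ref{4}), whereas the paper compresses it into ``since $\{0\}_i \subseteq \{0\}_{i+1}$ \dots this easily leads to the conclusion.''
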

\begin{proof}
	We suppose that $A\ne 0$.
	We know (cf. Proposition \ref{The class of Zero in matrix rings}) that the class $C(0)$ consists of all the 
	nilpotent matrices.  Thanks to Lemma \ref{class of a conjugate} we know that we can replace $A$ by a conjugate to evaluate the length of path from $A$ to $0$.  
	Using  a result from \cite{HB}, we know that there 
	exists an invertible matrix $P\in GL_n(D)$ such that $PAP^{-1}$ is of the form $diag(J_{n_1},J_{n_2},\dots,J_{n_s})$ where the square 
	matrices $J_i\in M_i(D)$ are of the form given in (1) above.  
	Moreover the maximal size of the Jordan blocks is $l+1$ i.e., for all 
	$1\le i \le s$, we have $n_i\le l+1$.  Since for any integer $i\in 
	\mathbb N$, we have $\{0\}_i \subseteq \{0\}_{i+1}$, the above Lemma 
	\ref{Class of a Jordan block} implies that, for all $1\le i \le s$,
	$J_{n_i}\sim_l 0$.   This easily leads to the conclusion that $A\sim_l 0$. 
	
	\noindent The converse was proved in Lemma \ref{0_i is contained in N_i+1}.
\end{proof}
\begin{theorem}
\label{Diam of the class of nilpotent matrices}
Let $R=M_{n}(D)$, where $D$ is a division ring. Then, for nilpotent matrices $A,B\in M_n(k)$, with nilpotent indexes $n(A), n(B)$ respectively.  We have $d(A,B)\le max\{n(A),n(B)\} -1$.  In particular,  $diam_{R}\big(C(0)\big)=n-1$.
\end{theorem}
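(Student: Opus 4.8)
The plan is to reduce the statement to a sharp combinatorial fact about Jordan forms and then prove the distance bound by a double induction. First I would record the two ingredients that make the reduction legitimate. By Lemma \ref{class of a conjugate} conjugate elements have identical classes $\{\cdot\}_k$, and since each relation $\sim_k$ is symmetric one gets, for $A'=PAP^{-1}$ and $B'=QBQ^{-1}$ with $P,Q$ invertible, the equivalence $A\sim_{m-1}B\iff A'\sim_{m-1}B'$. Hence I may assume $A$ and $B$ are diagonal sums of Jordan blocks (these exist over a division ring by the Jordan theory cited before Lemma \ref{Class of a Jordan block}), all of size $\le m:=\max\{n(A),n(B)\}$, with block sizes summing to $n$ on each side. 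I also record the elementary observation that $\sim_1$ respects block sums: if $X=cd,\ X'=dc$ and $Y=ef,\ Y'=fe$, then $diag(X,Y)=diag(c,e)\,diag(d,f)$ and $diag(d,f)\,diag(c,e)=diag(X',Y')$, so $diag(X,Y)\sim_1 diag(X',Y')$. Combined with the per-block peeling $J_l\sim_1 diag(J_{l-1},J_1)$ of Corollary \ref{J_L+1 sim_1 (J_l,J1)}, a single $\sim_1$ step can peel some blocks while leaving others untouched.

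With these tools I would prove the sharp statement $A\sim_{m-1}B$ by induction on $m$. For $m=1$ both matrices are $0$ and there is nothing to prove. For $m\ge 2$, assuming the claim for $m-1$, I run a secondary induction on the size $n$ and split into cases. If both indices are $\le m-1$, the inductive hypothesis already gives $A\sim_{m-2}B$, hence $A\sim_{m-1}B$. If exactly one index equals $m$, say $n(A)=m>n(B)$, I peel $A$ once to a matrix $\tilde A$ of index $\le m-1$ (peeling every block lowers the maximal block size by one); then $\tilde A$ and $B$ both have index $\le m-1$, the inductive hypothesis yields $\tilde A\sim_{m-2}B$, and composing with $A\sim_1\tilde A$ gives $A\sim_{m-1}B$. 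The remaining case $n(A)=n(B)=m$ is the crux: both Jordan forms contain a block $J_m$, so after reordering I write $A=J_m\oplus A'$ and $B=J_m\oplus B'$ with $A',B'\in M_{n-m}(D)$ of index $\le m$; the size induction gives $A'\sim_{m-1}B'$, and freezing the common $J_m$ block (using block-sum compatibility together with $J_m\sim_1 J_m$ at every step) lifts this to $A\sim_{m-1}B$.

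From $A\sim_{m-1}B$ the displayed inequality $d(A,B)\le m-1=\max\{n(A),n(B)\}-1$ is immediate. For the ``in particular'' clause I would note that any nilpotent matrix in $M_n(D)$ is similar to a strictly upper triangular one (Lemma \ref{Nilpotent matrices division ring and upper triangular}), hence has index at most $n$; so $d(A,B)\le n-1$ for all $A,B\in C(0)$, while $J_n^{n}=0\ne J_n^{n-1}$ together with Proposition \ref{Nilpotent matrices} gives $d(J_n,0)=n-1$, so the supremum is attained and $diam_{R}(C(0))=n-1$.

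The step I expect to be the main obstacle is precisely the case $n(A)=n(B)=m$. The naive route $A\sim_{m-1}0\sim_{m-1}B$ only yields $d(A,B)\le 2(m-1)$, and peeling both sides down one level before recursing wastes a step and produces the bound $m$ rather than $m-1$. The device that saves the extra step is to avoid passing through $0$ altogether and instead keep a full-size block $J_m$ fixed throughout while recursing on the complementary blocks, which is why the argument needs the secondary induction on the matrix size together with the block-sum compatibility of $\sim_1$; verifying that this freezing is legitimate, i.e. that the chosen Jordan representatives of $A$ and $B$ can be aligned to share a $J_m$ summand, is the one place where the reduction to conjugates must be invoked with care.
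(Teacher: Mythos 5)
Your proposal is correct and follows essentially the same route as the paper: reduce to Jordan form by conjugation invariance, peel blocks one step at a time via Corollary \ref{J_L+1 sim_1 (J_l,J1)} combined with block-sum compatibility of $\sim_1$, and---the crucial device---freeze a common top-size Jordan block so that the recursion runs on the complementary blocks instead of passing through $0$, with sharpness coming from Proposition \ref{Nilpotent matrices} applied to $J_n$. The only difference is bookkeeping: you organize a double induction on the maximal nilpotency index and the size $n$, whereas the paper inducts on the number of Jordan blocks of the matrix with smaller index, peeling $A$ down by $l-s$ steps before aligning at the common block $J_s$.
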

\begin{proof}
	We know that the class of zero in $M_n(D)$ is exactly the set of nilpotent matrices.  Hence the matrices $A,B\in C(0)$ and the distance between $A$ to $B$ is the shortest path from $A,B$ in the graph defined by $C(0)$.  Let us write $l=n(A)$ and $s=n(B)$, by symmetry we may assume that $l\ge s$.  Let $diag(J_{n_1},J_{n_2},
	\dots, J_{n_r})$ be the Jordan form of $B$, where $r\ge 1$ and $s=n_1\ge n_2,\dots \ge n_r$.
	We will use induction on $r$.  In the proof, to avoid heavy notations, we will write, $(c_1,c_2,\dots ,c_t)$ for $diag(c_1,c_2,\dots,c_t$) (where  $c_1,c_2,\dots,c_t $ are square matrices).
	
	\noindent If $r=1$, then $B=J_s$ and $A=(J_l,A')$ where $A'$ is a nilpotent matrix of index $\le l$. Using repeatedly Corollary 
	\ref{J_L+1 sim_1 (J_l,J1)}, we can write $A=(J_l,A')\sim_{l-s} (J_s,A'')$ and $(A'')^s=0$, so $d(A,(J_s,A''))\le l-s$ and
	$$
	d(A,B)\le d(A,(J_s,A''))+ d((J_s,A''),B).
	$$
	Using Proposition \ref{Nilpotent matrices}, we get $d((J_s,A''),B)=d((J_s,A''),J_s)=d(A'',0)\le s-1$ and we conclude 
	$d(A,B)\le l-1$, as required.
	
	Suppose now that the formula is proved for matrices $B$ having less than 
	$r> 1$ Jordan blocks and consider a matrix 
	$B=(J_{n_1},\dots,J_{n_r})=(J_s,B')$, with $(B')^s=0$.   As above we have $A=(J_l,A')\sim_{l-s}(J_s,A'')$ and the 
	induction hypothesis gives also $d(A'',B')\le max\{n(A''),n(B')\}-1$.   
This gives $d(A,B)\le d(A,(J_s,A''))+d((J_s,A''),B)\le l-s + d(A'',B')\le 
l-s + max\{n(A''),n(B')\}-1\le l-s+s-1=l-1$, as required.
	
	In particular, since the maximal index of nilpotency for matrices in $R=M_n(k)$ is $n$, and $d(J_n, 0)=n-1$ we get that $diam_RC(0)=n-1$. 
\end{proof}

We will now show that the diameter of the matrix ring $M_n(D)$, over a division ring $D$, is itself $n-1$.
The following lemma is far from surprising but needs to be proved.

\begin{lemma}
	\label{Size of diagonal}
	Suppose that $D$ is a division ring and that $A,B\in M_n(D)$ are of the form
	$$
	A=\begin{pmatrix}
	U & 0 \\ 
	0 & N
	\end{pmatrix}, B=\begin{pmatrix}
	V & 0 \\ 
	0 & M
	\end{pmatrix}  
	$$
	where $U\in GL_r(D)$ and $V\in Gl_s(D)$ are invertible matrices and $N,M$ are nilpotent matrices.
	If $A\sim B$ then $r=s$.
\end{lemma}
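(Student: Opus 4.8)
The plan is to separate the invertible part from the nilpotent part of each matrix through an asymptotic rank computation, and then to transport powers across the relation $A\sim B$ using Proposition \ref{First basic prpoerties of connected elements}(3).

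First I would record what the block form says about powers. Since $U\in GL_r(D)$ while $N$ is nilpotent, say $N^{n}=0$, we have $A^k=\mathrm{diag}(U^k,0)$ for every $k\ge n$, and because $U^k$ is still invertible this matrix has rank exactly $r$. The identical argument gives $\mathrm{rank}(B^k)=s$ for all large $k$. Here rank means the dimension of the column space of a matrix as a right $D$-vector space (equivalently the inner rank); this notion is well defined over a division ring and satisfies $\mathrm{rank}(PQ)\le\min\{\mathrm{rank}(P),\mathrm{rank}(Q)\}$, which is the only structural property of rank the proof will use.

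Next I would unpack the hypothesis. By definition $A\sim B$ means $A\sim_m B$ for some $m\in\mathbb{N}$, and Proposition \ref{First basic prpoerties of connected elements}(3) then supplies matrices $X,Y\in M_n(D)$ with $A^{m+l}=XB^lY$ and $B^{m+l}=YA^lX$ for every $l\in\mathbb{N}$. Fixing $l$ large enough that both $A^{m+l}$ and $B^l$ have reached their stable ranks, the first identity forces $r=\mathrm{rank}(A^{m+l})\le\mathrm{rank}(B^l)=s$, and by the symmetric roles of the second identity $s\le r$. Hence $r=s$, which is the assertion.

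The computation is short once the stable rank is established; the delicate point is the behaviour of rank over a \emph{noncommutative} division ring, so the main obstacle is really bookkeeping. I would want to confirm carefully that column rank over $D$ is well defined and submultiplicative, and that the nilpotent blocks $N^l$ and $M^l$ genuinely vanish for $l$ large, so that the stable ranks of $A$ and $B$ are exactly $r$ and $s$. Beyond invertibility of $U,V$ and nilpotency of $N,M$, no finer structural information about the blocks is needed.
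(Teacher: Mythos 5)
Your proof is correct, and it reaches the conclusion by a genuinely more direct route than the paper's. The paper proceeds in three steps: it first replaces $A,B$ by high powers (via Theorem \ref{The graph C(a)}(3)) so that one may assume $N=M=0$; it then takes powers again, using Proposition \ref{First basic prpoerties of connected elements}, to reduce to the case $A\sim_1 B$; finally it writes $A=XY$, $B=YX$, block-decomposes $X$ and $Y$, and observes that the intertwining relations $AX=XB$ and $YA=BY$ force the off-diagonal blocks to vanish, yielding $U=X_1Y_1\in GL_r(D)$ and $V=Y_1X_1\in GL_s(D)$ with $X_1\in M_{r\times s}(D)$, whence $r=s$. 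You skip both reduction steps: you keep the relation $A\sim_m B$ as given, apply Proposition \ref{First basic prpoerties of connected elements}(3) to get $A^{m+l}=XB^{l}Y$ and $B^{m+l}=YA^{l}X$, and compare stable ranks using submultiplicativity of rank over $D$. Both arguments ultimately rest on the same Proposition \ref{First basic prpoerties of connected elements}(3); what differs is how the size comparison is extracted. Your version is shorter and cleaner, at the mild cost of invoking column-rank theory over a noncommutative division ring (standard, and your justification of well-definedness and of $\mathrm{rank}(PQ)\le\min\{\mathrm{rank}(P),\mathrm{rank}(Q)\}$ is the right thing to check); note that the paper's final step, deducing $r=s$ from the invertibility of $X_1Y_1$ and $Y_1X_1$, is itself a small dimension count of exactly the same flavour, so the paper does not really avoid rank, it just hides it. What the paper's block computation buys instead is the explicit relation $U=X_1Y_1$, $V=Y_1X_1$, i.e. $U\sim_1 V$; a variant of that computation is recycled in the proof of Theorem \ref{Diam of the matrix ring} to show that $U$ and $V$ are in fact similar, information that your rank argument does not produce.
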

\begin{proof}
	Let us first remark that there exists $l\in \NN$ such that $N^l=µ0$ and $M^l=0$.  Theorem \ref{The graph C(a)} implies that we may assume $M=0$ and $N=0$.
	Now, Taking powers again, Proposition \ref{First basic prpoerties of connected elements} shows that we may assume  $A\sim_1 B$.   So let us write $A=XY, B=YX \in M_n(R)$ and decompose $X$ and $Y$ as follows  
	$$
	X=\begin{pmatrix}
	X_1 & X_2 \\ 
	X_3 & X_4
	\end{pmatrix}, \quad {\rm and } \quad Y=\begin{pmatrix}
	Y_1 & Y_2 \\ 
	Y_3 & Y_4
	\end{pmatrix} 
	$$ 
	where $X_1\in M_{r\times s}(D)$, $Y_1\in M_{s\times r}(D)$.  This fixes the size of all the other matrices appearing in $X$ and $Y$.    Since $M$ and $N$ are zero, the two equations $AX=XB$ and $YB=BY$ quickly imply that $X_2,X_3,Y_2,Y_3$ are all zero. and we get $U=X_1Y_1$ and $V=Y_1X_1$.  Since $U$ and $V$ are invertible we easily conclude that $r=s$.  
\end{proof}

\begin{theorem}
	\label{Diam of the matrix ring}
	Let $D$ be a division ring $n\in \NN$.  Then 
	$$
	diam(M_n(D))=n-1.
	$$ 
\end{theorem}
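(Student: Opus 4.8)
The plan is to prove the two inequalities separately, with the bulk of the work in the upper bound. For the lower bound, observe that the nilpotent class already realizes the value: by Theorem \ref{Diam of the class of nilpotent matrices} we have $\mathrm{diam}_R\big(C(0)\big)=n-1$, so $\mathrm{diam}(M_n(D))\ge n-1$. It then suffices to show $\mathrm{diam}\big(C(A)\big)\le n-1$ for every $A\in M_n(D)$, after which the supremum defining $\mathrm{diam}(M_n(D))$ is exactly $n-1$, attained at $C(0)$.

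For the upper bound I would first reduce to a Fitting normal form. Since $M_n(D)$ is simple artinian, every element is strongly $\pi$-regular, so each $A$ is conjugate to a block matrix $\mathrm{diag}(U,N)$ with $U\in GL_r(D)$ invertible and $N$ nilpotent of size $m:=n-r$ (cf. the canonical-form results of \cite{C,HB}). By Lemma \ref{class of a conjugate} the class and hence the diameter are invariant under conjugation, so it is enough to bound $\mathrm{diam}\big(C(\mathrm{diag}(U,N))\big)$. Applying the same decomposition to an arbitrary vertex of the class and invoking Lemma \ref{Size of diagonal} to see that the invertible size $r$ is constant along $\sim$, I conclude that every vertex of $C(A)$ is conjugate to some $\mathrm{diag}(U',N')$ with $U'\in GL_r(D)$ and $N'$ nilpotent of size $m$.

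Next I would compute the diameter by decoupling the two blocks. The block-diagonal matrices form a subring isomorphic to $M_r(D)\times M_m(D)$, and every simultaneous $\sim_1$-move there lifts to a $\sim_1$-move in $M_n(D)$, so for two block-diagonal representatives the distance in $M_n(D)$ is at most their distance in the product ring. Combining $\mathrm{diam}_{M_r(D)}(C(U'))\le 1$ (Lemma \ref{6}, valid as $M_r(D)$ is Dedekind finite) with $\mathrm{diam}_{M_m(D)}(C(0))=m-1$ (Theorem \ref{Diam of the class of nilpotent matrices}) through the product estimate of Proposition \ref{diameter of a product}, two block-diagonal representatives of $C(A)$ are joined within $\max\{1,m-1\}$ steps. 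An arbitrary vertex is conjugate to such a representative, and conjugate elements are always adjacent, since $pap^{-1}=(pa)(p^{-1})$ while $a=(p^{-1})(pa)$ gives $a\sim_1 pap^{-1}$; so one further step handles a general pair. The accounting then closes: if $r=0$ the class is exactly $C(0)$ and the bound $n-1$ is Theorem \ref{Diam of the class of nilpotent matrices}; if $r\ge 1$ and $m\ge 2$ then $d\le\max\{1,m-1\}+1=m=n-r\le n-1$; and if $r\ge 1$ with $m\le 1$ the nilpotent block is forced to be $0$, so $C(A)$ collapses to a single conjugacy class and $d\le 1\le n-1$ (for $n\ge 2$). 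Taking the supremum over $A$ gives $\mathrm{diam}(M_n(D))=n-1$.

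The main obstacle is the assertion in the second paragraph that the invertible block $U'$ is not merely of the correct size but lies in the same $\sim$-class as $U$ inside $M_r(D)$; this is exactly what lets Lemma \ref{6} contribute only $1$ to the estimate. I expect to prove it from the intertwiners of Proposition \ref{First basic prpoerties of connected elements}(3): if $A\sim B$ there are $X,Y$ with $AX=XB$, $YA=BY$ and $A^k=XY$, $B^k=YX$, and restricting $X,Y$ to the stable images $\mathrm{im}(A^k)$ and $\mathrm{im}(B^k)$ for $k\ge m$ (on which $A$ and $B$ act invertibly, realizing the two invertible blocks) yields mutually inverse isomorphisms that conjugate the invertible parts; over a general division ring this is the noncommutative analogue of the classical fact that $XY$ and $YX$ share their nonzero-eigenvalue Jordan structure. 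The only other delicate point is that the accounting above is tight precisely at the nilpotent class $r=0$, which is exactly the case treated cleanly and without any conjugation overhead by Theorem \ref{Diam of the class of nilpotent matrices}; for $r\ge 1$ the slack $m\le n-1$ absorbs the single extra conjugation step.
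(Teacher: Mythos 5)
Your strategy is essentially the paper's own: reduce to Fitting form $\mathrm{diag}(U,N)$ using Lemma \ref{class of a conjugate}, use Lemma \ref{Size of diagonal} to see that the size of the invertible block is an invariant of the class, prove that the invertible blocks of two related vertices are actually conjugate via the intertwiners of Proposition \ref{First basic prpoerties of connected elements}(3) (your stable-image argument is an invariant rephrasing of the paper's block computation $U^{r+l}=X_1V^lY_1$ followed by $UX_1=X_1V$), and let the nilpotent blocks carry the distance via Theorem \ref{Diam of the class of nilpotent matrices}. Routing the recombination explicitly through the block-diagonal subring $M_r(D)\times M_m(D)$ and Proposition \ref{diameter of a product} is a nice way of making precise what the paper leaves implicit when it writes $d(A,B)=d(M,N)$.

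There is, however, one concrete slip in your accounting. You assert that ``one further step handles a general pair'' because conjugate elements are adjacent. A general pair has \emph{two} endpoints, each of which must be conjugated into block-diagonal form, so under your adjacency argument the cost is $+2$, not $+1$; with $+2$, a class with $r=1$ and $m=n-1$ gets the bound $\max\{1,n-2\}+2=n$, and the upper bound $n-1$ is lost. The repair is immediate and already in your toolkit: Lemma \ref{class of a conjugate} says conjugation preserves the sets $\{a\}_k$ exactly, hence preserves distances, so passing to block-diagonal representatives costs nothing at all --- this is precisely how the paper (and your own first reduction) uses that lemma. With the $+0$ cost, $d\le\max\{1,m-1\}\le n-1$ in every case, and your argument closes correctly.
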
 
\begin{proof}
	A consequence of the Fitting Lemma is that any matrix $A\in M_n(D)$ is similar to a block diagonal matrix of the form $diag(U,N)$ where $U$ is an invertible matrix and $N$ a nilpotent matrix. 
	  Lemma \ref{class of a conjugate} shows that to compute the distance between two different matrices that 
	  are in the same commutative class, we may use similar matrices.  Thus we need to compute $d(A,B)$ where $A$ and $B$ are of the form
	  $A=diag(U,N)$ and $B=diag(V,M)$. 
	The preceding lemma \ref{Size of diagonal} shows that $U, V\in Gl_s(D)$ and $N,M \in M_{n-s}(D)$.  Theorem \ref{Diam of the class of nilpotent matrices} and  Lemma \ref{6} show that we may assume $0<s<n$.

Since the matrices $N$ and $M$ are nilpotent we conclude that their distance is less or equal to $n-2$. This means that there is a sequence of factorizations of length $\le n-2$ linking $M$ and $N$.   We claim that the matrices $U$ and $V$ are in fact similar.  Assume that $A\sim_r B$, according to  Proposition \ref{First basic prpoerties of connected elements} we know that there exist matrices $X,Y\in M_n(D)$ such that $AX=XB$ and $YA=BY$, moreover for any $l\in \NN$, we have that $A^{r+l}=XB^lY$.  Choosing $l$ such that $M^l=N^l=0$, and writing $X, Y$ as blocks matrices with $X_1,Y_1 \in M_s(D)$, this last equality shows that 
	$$
	\begin{pmatrix}
	U^{r+l} & 0 \\ 
	0 & 0
	\end{pmatrix} =\begin{pmatrix}
	X_1 & X_2 \\ 
	X_3 & X_4
	\end{pmatrix} \begin{pmatrix}
	V^{l} & 0 \\ 
	0 & 0
	\end{pmatrix} \begin{pmatrix}
	Y_1 & Y_2 \\ 
	Y_3 & Y_4
	\end{pmatrix}. 
	$$   
    Comparing the blocks on the top left corner gives $U^{r+l}=X_1V^lY_1$.  Since $U$ is invertible, we conclude that the matrices $X_1$ and $Y_1$ are also invertible.  Now, comparing again the top left corner blocks of the equality $AX=XB$, we get $UX_1=X_1V$.  This shows that $U$and $V$ are similar, as claimed.  This implies that $U\sim_1 V$.   Since $A=diag(U,N)$ and $B=diag(V,M)$ (with sizes of $U$ and $N$ greater or equal to $1$).  We conclude, thanks to Theorem \ref{Diam of the class of nilpotent matrices}, that $d(A,B)=d(M,N)\le n-2$.   Hence the class with the biggest distance is the class of nilpotent matrices, so that Theorem \ref{Diam of the class of nilpotent matrices} implies that $Diam (M_n(D))=n-1$.   
\end{proof}
\begin{theorem}
	Let $R$ be a semisimple ring and $R= M_{n_1}(D_1)\times \dots \times M_{n_l}(D_l)$ be its Wedderrburn Artin decomposition where $D_1,\dots, D_l$ are division rings.  Then 
	$$
	diam(R)={\rm max}\{n_i-1 \mid 1\le i \le l\}.
	$$
\end{theorem}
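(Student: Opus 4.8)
The plan is to combine the two main computations already established in the excerpt: the formula $\mathrm{diam}(M_n(D)) = n-1$ for a single matrix block over a division ring (Theorem \ref{Diam of the matrix ring}) and the behaviour of the diameter under finite direct products (Proposition \ref{diameter of a product}). The Wedderburn--Artin theorem provides the decomposition $R = M_{n_1}(D_1) \times \dots \times M_{n_l}(D_l)$ as a product of matrix rings over division rings, so the statement should follow by reducing the product case to the single-factor case.

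First I would recall that Theorem \ref{Diam of the matrix ring} gives $\mathrm{diam}(M_{n_i}(D_i)) = n_i - 1$ for each individual factor. Then I would apply Proposition \ref{diameter of a product}, which states that $\mathrm{diam}(R \times S) = \max\{\mathrm{diam}(R), \mathrm{diam}(S)\}$ for two rings $R, S$. The only genuine work is to extend this from a product of two rings to a product of $l$ rings, which is a routine induction on $l$: writing $R = M_{n_1}(D_1) \times \left( M_{n_2}(D_2) \times \dots \times M_{n_l}(D_l) \right)$ and applying the two-factor statement together with the induction hypothesis yields
$$
\mathrm{diam}(R) = \max\{ n_1 - 1, \ \max\{ n_i - 1 \mid 2 \le i \le l \} \} = \max\{ n_i - 1 \mid 1 \le i \le l \}.
$$

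I do not anticipate a serious obstacle here, since both ingredients are already in hand. The one point that deserves a remark is that Proposition \ref{diameter of a product} is stated for two rings; to invoke it inductively I should note that an iterated product of rings is again a ring, so the two-factor result applies verbatim at each stage of the induction. This is the step I would be most careful to state explicitly, as it is the formal bridge between the binary and the $l$-ary product formula. Everything else is a direct citation of the preceding results.
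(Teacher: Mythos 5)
Your proposal is correct and follows exactly the paper's own argument: the paper's proof is simply the citation of Theorem \ref{Diam of the matrix ring} together with Proposition \ref{diameter of a product}, which is what you do. Your explicit remark about the routine induction needed to pass from the binary product formula to $l$ factors is a detail the paper leaves implicit, but it does not constitute a different approach.
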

\begin{proof}
	This is a simple consequence of Theorem \ref{Diam of the matrix ring} and Proposition \ref{diameter of a product}
\end{proof}

\begin{definition}
The girth of a graph $G$, denoted by $gr(G)$, is the length of shortest cycle in $G$, provided $G$ contains a cycle; otherwise $gr(G)=\infty$.
\end{definition}
Note that if $R$ is a ring, then we define the commutatively closed girth of $R$ as follows:
\begin{center} $gr(C(R))=min\{ gr\big(C(a)\big) ~|~ a\in R \}$.\end{center}
\begin{theorem}
\label{3.1.1}
Let $D$ be a division ring. Then $gr_{M_{n}(D)}\big(C(0)\big)=3$.
\end{theorem}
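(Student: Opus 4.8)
The plan is to bound $gr_{M_n(D)}\big(C(0)\big)$ from below and from above separately, and show both bounds are $3$. Throughout I take $n\ge 2$, since for $n=1$ the only nilpotent matrix is $0$ and $C(0)$ has no edges (girth $\infty$). Recall from Proposition \ref{The class of Zero in matrix rings} that the vertices of $C(0)$ are exactly the nilpotent matrices of $M_n(D)$, and that adjacency is the symmetric relation $\sim_1$ restricted to \emph{distinct} vertices. The lower bound is then immediate: by the definition of the commutatively closed graph, edges join only distinct vertices and $\sim_1$ is symmetric, so $C(0)$ is a simple graph with no loops and no multiple edges. In a simple graph every cycle has length at least $3$ (see \cite{W}), whence $gr\big(C(0)\big)\ge 3$; in particular no separate algebraic argument is needed to exclude $2$-cycles.

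For the upper bound I would first isolate the elementary observation that \emph{distinct similar matrices are already adjacent}: if $B=UAU^{-1}$ with $U\in U(M_n(D))$ and $A\ne B$, then putting $c=U^{-1}$ and $d=UA$ gives $cd=A$ and $dc=B$, so $A\sim_1 B$. Consequently, any three pairwise distinct, mutually similar nilpotent matrices form a triangle in $C(0)$, and it suffices to exhibit three such matrices. Working inside the top-left $2\times 2$ block and padding with zeros to reach size $n$, I would take
$$
A=\begin{pmatrix} 0 & 1 \\ 0 & 0\end{pmatrix},\qquad B=\begin{pmatrix} 0 & 0 \\ 1 & 0\end{pmatrix},\qquad C=\begin{pmatrix} 1 & -1 \\ 1 & -1\end{pmatrix}.
$$
Each squares to $0$, is nonzero, and has rank one; moreover the three are pairwise distinct over every division ring, including $\FF_2$ (where $C$ becomes the all-ones matrix, still distinct from $A$ and $B$ because $A,B$ have zero diagonal).

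The one point requiring justification is that $A$, $B$, $C$ are genuinely similar over an arbitrary, possibly noncommutative, division ring. Here I would invoke the Jordan theory over division rings already used in this section (cf. \cite{HB}, \cite{C}): a nonzero nilpotent matrix in $M_2(D)$ has the single block $J_2$ as its normal form, so all three are conjugate to $J_2=e_{12}$ and hence to one another. By the observation above they are pairwise adjacent, producing a $3$-cycle $A$--$B$--$C$--$A$ in $C(0)$, so $gr\big(C(0)\big)\le 3$. Combined with the lower bound this gives $gr_{M_n(D)}\big(C(0)\big)=3$.

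There is essentially no hard step in this argument; the lower bound is a formal property of simple graphs, and the upper bound reduces, via the conjugation trick, to exhibiting one triangle. The only thing to get right is the \emph{universality} of the triangle construction — that three distinct rank-one square-zero matrices exist and stay similar over every division ring, including the two-element field and the noncommutative case — which the Jordan normal form result settles cleanly.
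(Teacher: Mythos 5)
Your proof is correct, and it follows the same overall strategy as the paper --- observe that $C(0)$ is a simple graph (so the girth is automatically at least $3$) and then exhibit one triangle among the nilpotent matrices --- but the triangle and the adjacency argument differ. The paper's triangle passes through the zero matrix itself: it shows $E_{1n}\sim_1 0$, $E_{n1}\sim_1 0$ by two-line explicit factorizations (e.g.\ $E_{1n}=E_{11}E_{1n}$ while $E_{1n}E_{11}=0$) and asserts $E_{1n}\sim_1 E_{n1}$, so no normal-form theory is needed at all (the printed proof has an apparent typo, listing $E_{2,n-1}$ where $E_{n1}$ is meant). You instead avoid the vertex $0$, prove the useful general fact that distinct similar matrices are always adjacent (the same conjugation trick that underlies Lemma \ref{class of a conjugate}), and then appeal to Jordan theory over division rings to see that your three square-zero rank-one matrices are mutually conjugate. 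That appeal is the only heavy ingredient, and it is dispensable: $B=PAP^{-1}$ for the transposition matrix $P$, and $C=UAU^{-1}$ for $U=\begin{pmatrix}1&1\\1&0\end{pmatrix}$, both of which are integer matrices invertible over any division ring, so your argument can be made exactly as elementary as the paper's. On the other hand, your write-up is more careful on two points the paper glosses over: the implicit hypothesis $n\ge 2$ (for $n=1$ the graph $C(0)$ is a single vertex and the girth is infinite, so the statement as printed needs this restriction), and the check that the three chosen matrices remain distinct over $\mathbb{F}_2$.
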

\begin{proof}
 We know the class of $\overline{\{0\}}$ in $R=M_n(D)$ is 
	the set of nilpotent matrices, where $D$ is a division ring (cf. Proposition \ref{The class of Zero in matrix rings}). 
 It is enough, to find three nilpotent matrices that form a cycle. It is easy to see that  $E_{1n}, E_{n1} \in \{0\}_1$ (Since $E_{1n}=E_{11} (E_{1n})$, $0=E_{1n}(E_{11})$, $E_{1n}=E_{1n}E_{nn}$ and $E_{nn}E_{1n}=0$) and $E_{1 n}\sim_1 E_{n1}$. Hence $E_{1,n}$, $E_{2,n-1}$ and $0$ form a cycle. 
Therefore $gr\big(C(0)\big)=3$.
\end{proof}
\par As an immediate consequence of Theorem \ref{3.1.1} and definition of $gr(C(R))$ , we get the following.
\begin{corollary}
Let $D$ be a  division ring. Then $gr\big(C(M_{n}(D))\big)=3$.
\end{corollary}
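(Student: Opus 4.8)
The plan is to derive the corollary directly from Theorem \ref{3.1.1} by squeezing $gr\big(C(M_n(D))\big)$ between the universal lower bound $3$ that the girth of any simple graph must satisfy and the upper bound $3$ supplied by the single class $C(0)$. No new computation is needed beyond what Theorem \ref{3.1.1} already provides.

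First I would record the upper bound. Since $0\in M_n(D)$, the element $0$ is among those over which the minimum in $gr\big(C(R)\big)=\min\{gr\big(C(a)\big)\mid a\in R\}$ is taken. Theorem \ref{3.1.1} gives $gr_{M_n(D)}\big(C(0)\big)=3$, and therefore $gr\big(C(M_n(D))\big)\le gr\big(C(0)\big)=3$. Next I would establish the matching lower bound. The key observation is that each graph $C(a)$ is a \emph{simple} graph: by the defining construction the vertices are distinct elements of $\overline{\{a\}}$, and two vertices are declared adjacent only when they are distinct. Hence $C(a)$ carries neither loops nor multiple edges, so any cycle it contains has length at least $3$. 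Consequently $gr\big(C(a)\big)\ge 3$ for every $a\in M_n(D)$ (this holds vacuously as $\infty\ge 3$ when $C(a)$ is acyclic), and taking the minimum gives $gr\big(C(M_n(D))\big)\ge 3$.

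Combining the two bounds yields $gr\big(C(M_n(D))\big)=3$. The only point demanding a moment's care is the lower bound, where one must invoke that the commutatively closed graph is simple so that cycles of length $1$ or $2$ are excluded; beyond that there is no real obstacle, the statement being the immediate consequence of Theorem \ref{3.1.1} and the definition of $gr\big(C(R)\big)$ that its placement in the text advertises.
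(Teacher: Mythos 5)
Your proof is correct and follows essentially the same route as the paper, which simply declares the corollary an immediate consequence of Theorem \ref{3.1.1} and the definition $gr\big(C(R)\big)=\min\{gr\big(C(a)\big)\mid a\in R\}$. Your only addition is spelling out the lower bound via the simplicity of the graphs $C(a)$ (no loops or multiple edges, hence no cycles of length $1$ or $2$), which the paper leaves implicit.
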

\begin{corollary}
	if $R$ is a semisimple ring then $gr\big(C(R)\big)=3$.
\end{corollary}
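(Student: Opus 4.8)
The final statement to prove is:

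\begin{quote}
If $R$ is a semisimple ring then $gr\big(C(R)\big)=3$.
\end{quote}

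The plan is to reduce this to the previously established case of a single matrix ring over a division ring. By the Wedderburn–Artin theorem, a semisimple ring $R$ decomposes as $R = M_{n_1}(D_1)\times\cdots\times M_{n_l}(D_l)$ for division rings $D_1,\dots,D_l$. Since $gr\big(C(R)\big)=\min\{gr(C(a))\mid a\in R\}$ is defined as a minimum over all elements, it suffices to exhibit a single element $a\in R$ whose component graph $C(a)$ contains a triangle, and to argue that no graph $C(a)$ can have girth smaller than $3$.

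First I would note that $3$ is a lower bound for any girth: the graphs $C(a)$ are simple (the adjacency relation is between \emph{distinct} vertices, and $\sim_1$ is symmetric by Theorem \ref{The graph C(a)}), so the shortest possible cycle has length $3$. Hence it remains only to realize a $3$-cycle in some $C(a)$. Here I would invoke the immediately preceding Theorem \ref{3.1.1} and its Corollary: for each factor $M_{n_i}(D_i)$ we have $gr\big(C(M_{n_i}(D_i))\big)=3$, witnessed concretely by the nilpotent matrices $E_{1,n_i}$, $E_{2,n_i-1}$ and $0$ forming a triangle inside $C(0)$ (when $n_i\ge 2$). The only subtlety is the degenerate case where every $n_i=1$, i.e. $R$ is a finite product of division rings and hence commutative; then every $C(a)$ is a single vertex and $gr\big(C(R)\big)=\infty$. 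So strictly the statement presumes at least one factor has $n_i\ge 2$; assuming this, pick the factor $M_{n_i}(D_i)$ with $n_i\ge 2$.

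The remaining step is to transport this triangle from the factor $M_{n_i}(D_i)$ to the product $R$. I would use Lemma \ref{4}, which gives $\overline{\{(a_1,\dots,a_l)\}}=\overline{\{a_1\}}\times\cdots\times\overline{\{a_l\}}$, together with the elementwise description of adjacency it yields. Taking $a=(0,\dots,0,A,0,\dots,0)$ with $A=E_{1,n_i}$ placed in the factor with $n_i\ge 2$ and $0$ elsewhere, the closure is $\overline{\{0\}}\times\cdots\times C(A)\times\cdots\times\overline{\{0\}}$, and two tuples are $\sim_1$-adjacent precisely when they are adjacent coordinatewise. Thus the triangle $0,\,E_{1,n_i},\,E_{2,n_i-1}$ in the $i$-th coordinate, with all other coordinates held at $0$ (which is $\sim_1$-adjacent to itself trivially via $0=0\cdot 0$), lifts to a $3$-cycle in $C(a)\subseteq C(R)$. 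This gives $gr\big(C(R)\big)\le 3$, and combined with the lower bound yields equality.

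The main obstacle I anticipate is the bookkeeping for adjacency in the product: one must check that holding a coordinate fixed at $0$ does not break the $\sim_1$ relation in that coordinate (it does not, since $0\sim_1 0$), so that a path living essentially in one factor remains a genuine path in the product graph. Lemma \ref{4} and Proposition \ref{diameter of a product} already contain exactly this padding argument, so the step is routine once those are cited. A minor point worth flagging explicitly is the commutative degenerate case, which the statement implicitly excludes.
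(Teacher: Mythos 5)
Your core argument is the paper's intended one: the paper gives no written proof for this corollary, but it clearly means Wedderburn--Artin decomposition $R=M_{n_1}(D_1)\times\dots\times M_{n_l}(D_l)$, the triangle $0,\,E_{1,n_i},\,E_{2,n_i-1}$ from Theorem \ref{3.1.1} inside a matrix factor, and the padding argument of Lemma \ref{4} (as in Proposition \ref{diameter of a product}) to lift the triangle to the product. Your lower-bound remark (the graphs are simple, so no cycle has length $<3$) and the coordinatewise adjacency check are fine.

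However, your handling of the degenerate case contains a genuine error. A finite product of division rings is \emph{not} necessarily commutative: the $D_i$ themselves may be noncommutative. Take $R=\mathbb{H}$, the real quaternions, a semisimple ring with $l=1$ and $n_1=1$. Here it is false that ``every $C(a)$ is a single vertex'': by Proposition \ref{0.2} and the proof of Lemma \ref{6}, for a noncentral (hence invertible) element $a$ the class $C(a)$ is the full conjugacy class $\{uau^{-1}\mid u\in U(R)\}$ and is a \emph{complete} graph; since a noncentral conjugacy class in a division ring is infinite (Herstein's theorem; for $\mathbb{H}$, the conjugates of $i$ form a whole sphere), this complete graph contains triangles and $gr\big(C(\mathbb{H})\big)=3$. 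So the corollary in fact holds for every \emph{noncommutative} semisimple ring, and the only true exception is a commutative $R$, i.e.\ all $n_i=1$ \emph{and} all $D_i$ fields, where the girth is indeed infinite. As written, your proof covers only the case where some $n_i\ge 2$; it leaves out noncommutative products of division rings and, worse, asserts the wrong value ($\infty$) of the girth for them. The fix is easy and stays within your framework: if all $n_i=1$ but some $D_i$ is noncommutative, pick a noncentral $a\in D_i$, observe that $C(a)$ is an infinite complete graph, and lift a triangle to the product exactly as you did with the nilpotent one. (Your instinct that the statement needs a caveat is correct --- the paper itself is silently false for, say, $R$ a field --- but the caveat is commutativity of $R$, not the sizes $n_i$ alone.)
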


\vspace{2mm}

\end{document}